\documentclass[12pt,reqno]{article}

\usepackage[usenames]{color}
\usepackage{amssymb}
\usepackage{amsmath}
\usepackage{amsthm}
\usepackage{amsfonts}
\usepackage{amscd}
\usepackage{graphicx}
\usepackage{diagbox}
\usepackage{array}
\usepackage{xcolor}
\usepackage{thmtools}
\usepackage{thm-restate}
\DeclareMathOperator{\diam}{diam}
\usepackage[table]{xcolor}
\usepackage{booktabs}
\newcommand{\pred}[1]{\cellcolor{blue!15}#1}
\usepackage[colorlinks=true,
linkcolor=webgreen,
filecolor=webbrown,
citecolor=webgreen]{hyperref}

\definecolor{webgreen}{rgb}{0,.5,0}
\definecolor{webbrown}{rgb}{.6,0,0}

\usepackage{color}
\usepackage{fullpage}
\usepackage{float}

\usepackage{graphics}
\usepackage{latexsym}
\usepackage{epsf}
\usepackage{breakurl}

\usepackage[margin=1in]{geometry} 
\usepackage{tikz}
\usetikzlibrary {arrows.meta}

\newcommand{\seqnum}[1]{\href{https://oeis.org/#1}{\rm \underline{#1}}}

\usepackage{rotating}
\newcommand{\cG}{{\cal G}}
\newcommand{\cH}{{\cal H}}
\newcommand{\cA}{{\cal A}}
\newcommand{\cL}{{\cal L}}
\newcommand{\cS}{{\cal S}}
\newcommand{\cI}{{\cal I}}
\newcommand{\floor}[1]{\ensuremath{\lfloor #1 \rfloor}}

\newcommand{\fracpart}[1]{\ensuremath{\{ #1 \}}}
\newcommand{\NN}{\ensuremath{{\mathbb N}}}
\newcommand{\ZZ}{\ensuremath{{\mathbb Z}}}

\usepackage{amsrefs}
\newcommand{\MRev}[1]{~~\href{http://www.ams.org/mathscinet-getitem?mr=#1}{{\bf MR~#1}}}
\newcommand{\doi}[1]{\href{https://dx.doi.org/#1}{#1}}

\begin{document}

\theoremstyle{plain}
\newtheorem{theorem}{Theorem}
\newtheorem{corollary}[theorem]{Corollary}
\newtheorem{lemma}[theorem]{Lemma}
\newtheorem{proposition}[theorem]{Proposition}

\theoremstyle{definition}
\newtheorem{definition}[theorem]{Definition}
\newtheorem{example}[theorem]{Example}
\newtheorem{conjecture}[theorem]{Conjecture}
\newtheorem{observation}[theorem]{Observation}
\newtheorem{question}[theorem]{Question}

\theoremstyle{remark}
\newtheorem{remark}[theorem]{Remark}

\begin{center}
\vskip 1cm{\bf\LARGE Optimal Diameters of High Multiplicity $g$-Golomb Rulers
}

\vskip 1 cm
\large
Aditya Gupta\\
Department of Mathematics\\
University of Washington\
Seattle, WA 98195\\
USA\\
\href{mailto:adi3011@uw.edu}{\tt adi3011@uw.edu} \\
\ \\
Kevin O'Bryant\\
City University of New York\\
The Graduate Center and The College of Staten Island\\
2800 Victory Boulevard\\
Staten Island, NY 10314\\
USA \\
\href{mailto:email}{\tt kevin.obryant@csi.cuny.edu} \\

\end{center}

\vskip .2 in

\begin{abstract}
A set $\cG$ of integers is called a $g$-Golomb ruler of length $n$ if the difference between any two distinct elements of $\cG $ is repeated at most $g$ times. If $g=1$, these are also called $B_2$-sets, Sidon sets, and Babcock sets. We define $G(g,n)$ to represent the minimum diameter of a $g$-Golomb Ruler. In this paper, we prove that for all $b\ge 1$, if $g \ge \frac{7}{4}\left(b^{3/2} -b\right)+1,$ then $G(g,g+b)=g+2b-2$. Sharper bounds are given for $b\le 18$. The main technique is through an arithmetic property of the integers that are \emph{not} in a $g$-Golomb ruler, leading us to introduce LM rulers, a new class of rulers where every distance $d$ occurs as a difference at most $d-1$ times. We show that the minimum diameter of an $n$-element LM ruler $L(n)$ is $\sqrt{8/9} \cdot (n-1)^{3/2} \le L(n) \le \frac{7}{4}\left((n+1)^{3/2}-(n+1)\right).$
\end{abstract}

\clearpage
\section{Introduction}\label{sec:intro}
A Golomb ruler is a set $\cG$ of integers with the property that for each $d\ge 1$, there is at most 1 pair $(a,b)\in\cG\times\cG$ with $d=a-b$. That is, with these marks on a ruler, there is at most one way to measure a distance of $d$. Golomb rulers are also called Sidon sets, Babcock sets, and $B_2$ sets, and there are several generalizations that have received attention~\cite{2004.Obryant}.

Here, we consider $g$-Golomb rulers, where $g$ is a positive integer. Some small examples were computed in \cite{1984.Atkinson&Hassenklover} and \cite{1986.Atkinson&Santoro&Urrutia}, and families of examples in \cite{2015.Caicedo&Martos&Trujillo} and \cite{2021.Ojeda&Urbano&Solarte}.
\begin{definition}\label{def:g-Golomb ruler}
    The set $\cG$ is a \emph{$g$-Golomb ruler} if there are at most $g$ pairs $(a,b) \in \cG \times\cG$ with $d=b-a$. We set $G(g,n)$ to be the minimum diameter of an $g$-Golomb ruler with $n$ elements.
\end{definition}
Sequences~\seqnum{A003022}, \seqnum{A392461}, \seqnum{A392462}, \seqnum{A392463}, \seqnum{A395265} are $G(1,n)$, $G(2,n)$, $G(3,n)$, $G(4,n)$, $G(5,n)$, respectively. As part of this work, we have extended $G(g,n)$ for $2\le g \le 5$.

The set $\{1,2,\dots,n\}$ is a $g$-Golomb ruler for $g\ge n-1$, and so trivially $G(g,n)=n-1$ for $n\le g+1$. The table of non-trivial values---$G(g,g+b)$, where $g$ and $b$ are positive integers---is in Table~\ref{tab:sideways table}. Row $g$ of Table~\ref{tab:sideways table} is asymptotically $g^{-1}n^2$~\cite{2015.Caicedo&Martos&Trujillo}. Our main result, Theorem~\ref{theorem:main}, identifies precisely the infinite ends of the columns of Table~\ref{tab:sideways table}.

\begin{table}[htbp]\label{tab:sideways table}
\centering
\scriptsize
\setlength{\tabcolsep}{2pt}
\renewcommand{\arraystretch}{1.05}

\resizebox{\textwidth}{!}{
\begin{tabular}{r@{\quad}|@{\quad}*{27}{c}}
\toprule
$g \backslash b$
& 1 & 2 & 3 & 4 & 5 & 6 & 7 & 8 & 9 & 10
& 11 & 12 & 13 & 14 & 15 & 16 & 17 & 18 & 19 & 20
& 21 & 22 & 23 & 24 & 25 & 26 & 27 \\
\midrule
1  & \pred{1} & 3 & 6 & 11 & 17 & 25 & 34 & 44 & 55 & 72 & 85 & 106 & 127 & 151 & 177 & 199 & 216 & 246 & 283 & 333 & 356 & 372 & 425 & 480 & 492 & 553 & 585 \\
2  & \pred{2} & 4 & 6 & 9 & 13 & 18 & 23 & 29 & 36 & 44 & 53 & 63 & 74 & 84 & 97 &  &  &  &  &  &  &  &  &  &  &  &  \\
3  & \pred{3} & \pred{5} & 7 & 10 & 13 & 16 & 20 & 25 & 30 & 35 & 42 & 49 & 56 & 64 & 73 &  &  &  &  &  &  &  &  &  &  &  &  \\
4  & \pred{4} & \pred{6} & 8 & 10 & 13 & 16 & 20 & 23 & 28 & 32 & 37 & 43 & 49 & 55 &  &  &  &  &  &  &  &  &  &  &  &  &  \\
5  & \pred{5} & \pred{7} & \pred{9} & 11 & 14 & 16 & 20 & 23 & 27 & 31 & 35 & 40 & 45 & 50 &  &  &  &  &  &  &  &  &  &  &  &  &  \\
6  & \pred{6} & \pred{8} & \pred{10} & 12 & 14 & 17 & 20 & 23 & 26 & 30 & 34 & 38 &  &  &  &  &  &  &  &  &  &  &  &  &  &  &  \\
7  & \pred{7} & \pred{9} & \pred{11} & 13 & 15 & 18 & 20 & 23 & 26 & 30 & 33 & 37 &  &  &  &  &  &  &  &  &  &  &  &  &  &  &  \\
8  & \pred{8} & \pred{10} & \pred{12} & \pred{14} & 16 & 18 & 21 & 24 & 27 & 30 & 33 & 37 &  &  &  &  &  &  &  &  &  &  &  &  &  &  &  \\
9  & \pred{9} & \pred{11} & \pred{13} & \pred{15} & 17 & 19 & 22 & 24 & 27 & 30 & 34 & 37 &  &  &  &  &  &  &  &  &  &  &  &  &  &  &  \\
10 & \pred{10} & \pred{12} & \pred{14} & \pred{16} & 18 & 20 & 23 & 25 & 28 & 31 & 34 & 37 &  &  &  &  &  &  &  &  &  &  &  &  &  &  &  \\
11 & \pred{11} & \pred{13} & \pred{15} & \pred{17} & 19 & 21 & 23 & 26 & 29 & 31 & 34 & 37 &  &  &  &  &  &  &  &  &  &  &  &  &  &  &  \\
12 & \pred{12} & \pred{14} & \pred{16} & \pred{18} & \pred{20} & 22 & 24 & 27 & 29 & 32 & 35 & 38 &  &  &  &  &  &  &  &  &  &  &  &  &  &  &  \\
13 & \pred{13} & \pred{15} & \pred{17} & \pred{19} & \pred{21} & 23 & 25 & 28 & 30 & 33 & 35 & 38 &  &  &  &  &  &  &  &  &  &  &  &  &  &  &  \\
14 & \pred{14} & \pred{16} & \pred{18} & \pred{20} & \pred{22} & 24 & 26 & 28 & 31 & 33 & 36 & 39 &  &  &  &  &  &  &  &  &  &  &  &  &  &  &  \\
15 & \pred{15} & \pred{17} & \pred{19} & \pred{21} & \pred{23} & 25 & 27 & 29 & 32 & 34 & 37 & 40 &  &  &  &  &  &  &  &  &  &  &  &  &  &  &  \\
16 & \pred{16} & \pred{18} & \pred{20} & \pred{22} & \pred{24} & 26 & 28 & 30 & 33 & 35 & 38 & 40 &  &  &  &  &  &  &  &  &  &  &  &  &  &  &  \\
17 & \pred{17} & \pred{19} & \pred{21} & \pred{23} & \pred{25} & \pred{27} & 29 & 31 & 33 & 36 & 38 & 41 &  &  &  &  &  &  &  &  &  &  &  &  &  &  &  \\
18 & \pred{18} & \pred{20} & \pred{22} & \pred{24} & \pred{26} & \pred{28} & 30 & 32 & 34 & 37 & 39 & 42 &  &  &  &  &  &  &  &  &  &  &  &  &  &  &  \\
19 & \pred{19} & \pred{21} & \pred{23} & \pred{25} & \pred{27} & \pred{29} & 31 & 33 & 35 & 38 & 40 & 43 &  &  &  &  &  &  &  &  &  &  &  &  &  &  &  \\
20 & \pred{20} & \pred{22} & \pred{24} & \pred{26} & \pred{28} & \pred{30} & 32 & 34 & 36 & 38 & 41 & 43 &  &  &  &  &  &  &  &  &  &  &  &  &  &  &  \\
21 & \pred{21} & \pred{23} & \pred{25} & \pred{27} & \pred{29} & \pred{31} & 33 & 35 & 37 & 39 & 42 & 44 &  &  &  &  &  &  &  &  &  &  &  &  &  &  &  \\
22 & \pred{22} & \pred{24} & \pred{26} & \pred{28} & \pred{30} & \pred{32} & \pred{34} & 36 & 38 & 40 & 43 & 45 &  &  &  &  &  &  &  &  &  &  &  &  &  &  &  \\
23 & \pred{23} & \pred{25} & \pred{27} & \pred{29} & \pred{31} & \pred{33} & \pred{35} & 37 & 39 & 41 & 44 & 46 &  &  &  &  &  &  &  &  &  &  &  &  &  &  &  \\
24 & \pred{24} & \pred{26} & \pred{28} & \pred{30} & \pred{32} & \pred{34} & \pred{36} & 38 & 40 & 42 & 44 & 47 &  &  &  &  &  &  &  &  &  &  &  &  &  &  &  \\
25 & \pred{25} & \pred{27} & \pred{29} & \pred{31} & \pred{33} & \pred{35} & \pred{37} & 39 & 41 & 43 & 45 & 48 &  &  &  &  &  &  &  &  &  &  &  &  &  &  &  \\
26 & \pred{26} & \pred{28} & \pred{30} & \pred{32} & \pred{34} & \pred{36} & \pred{38} & 40 & 42 & 44 & 46 & 49 &  &  &  &  &  &  &  &  &  &  &  &  &  &  &  \\
27 & \pred{27} & \pred{29} & \pred{31} & \pred{33} & \pred{35} & \pred{37} & \pred{39} & \pred{41} & 43 & 45 & 47 & 50 &  &  &  &  &  &  &  &  &  &  &  &  &  &  &  \\
28 & \pred{28} & \pred{30} & \pred{32} & \pred{34} & \pred{36} & \pred{38} & \pred{40} & \pred{42} & 44 & 46 & 48 & 50 &  &  &  &  &  &  &  &  &  &  &  &  &  &  &  \\
29 & \pred{29} & \pred{31} & \pred{33} & \pred{35} & \pred{37} & \pred{39} & \pred{41} & \pred{43} & 45 & 47 & 49 & 51 &  &  &  &  &  &  &  &  &  &  &  &  &  &  &  \\
30 & \pred{30} & \pred{32} & \pred{34} & \pred{36} & \pred{38} & \pred{40} & \pred{42} & \pred{44} & 46 & 48 & 50 & 52 &  &  &  &  &  &  &  &  &  &  &  &  &  &  &  \\
31 & \pred{31} & \pred{33} & \pred{35} & \pred{37} & \pred{39} & \pred{41} & \pred{43} & \pred{45} & 47 & 49 & 51 & 53 &  &  &  &  &  &  &  &  &  &  &  &  &  &  &  \\
32 & \pred{32} & \pred{34} & \pred{36} & \pred{38} & \pred{40} & \pred{42} & \pred{44} & \pred{46} & 48 & 50 & 52 & 54 &  &  &  &  &  &  &  &  &  &  &  &  &  &  &  \\
33 & \pred{33} & \pred{35} & \pred{37} & \pred{39} & \pred{41} & \pred{43} & \pred{45} & \pred{47} & \pred{49} & 51 & 53 & 55 &  &  &  &  &  &  &  &  &  &  &  &  &  &  &  \\
34 & \pred{34} & \pred{36} & \pred{38} & \pred{40} & \pred{42} & \pred{44} & \pred{46} & \pred{48} & \pred{50} & 52 & 54 & 56 &  &  &  &  &  &  &  &  &  &  &  &  &  &  &  \\
35 & \pred{35} & \pred{37} & \pred{39} & \pred{41} & \pred{43} & \pred{45} & \pred{47} & \pred{49} & \pred{51} & 53 & 55 & 57 &  &  &  &  &  &  &  &  &  &  &  &  &  &  &  \\
36 & \pred{36} & \pred{38} & \pred{40} & \pred{42} & \pred{44} & \pred{46} & \pred{48} & \pred{50} & \pred{52} & 54 & 56 & 58 &  &  &  &  &  &  &  &  &  &  &  &  &  &  &  \\
37 & \pred{37} & \pred{39} & \pred{41} & \pred{43} & \pred{45} & \pred{47} & \pred{49} & \pred{51} & \pred{53} & 55 & 57 & 59 &  &  &  &  &  &  &  &  &  &  &  &  &  &  &  \\
38 & \pred{38} & \pred{40} & \pred{42} & \pred{44} & \pred{46} & \pred{48} & \pred{50} & \pred{52} & \pred{54} & 56 & 58 & 60 &  &  &  &  &  &  &  &  &  &  &  &  &  &  &  \\
39 & \pred{39} & \pred{41} & \pred{43} & \pred{45} & \pred{47} & \pred{49} & \pred{51} & \pred{53} & \pred{55} & \pred{57} & 59 & 61 &  &  &  &  &  &  &  &  &  &  &  &  &  &  &  \\
40 & \pred{40} & \pred{42} & \pred{44} & \pred{46} & \pred{48} & \pred{50} & \pred{52} & \pred{54} & \pred{56} & \pred{58} & 60 & 62 &  &  &  &  &  &  &  &  &  &  &  &  &  &  &  \\
41 & \pred{41} & \pred{43} & \pred{45} & \pred{47} & \pred{49} & \pred{51} & \pred{53} & \pred{55} & \pred{57} & \pred{59} & 61 & 63 &  &  &  &  &  &  &  &  &  &  &  &  &  &  &  \\
42 & \pred{42} & \pred{44} & \pred{46} & \pred{48} & \pred{50} & \pred{52} & \pred{54} & \pred{56} & \pred{58} & \pred{60} & 62 & 64 &  &  &  &  &  &  &  &  &  &  &  &  &  &  &  \\
43 & \pred{43} & \pred{45} & \pred{47} & \pred{49} & \pred{51} & \pred{53} & \pred{55} & \pred{57} & \pred{59} & \pred{61} & 63 & 65 &  &  &  &  &  &  &  &  &  &  &  &  &  &  &  \\
44 & \pred{44} & \pred{46} & \pred{48} & \pred{50} & \pred{52} & \pred{54} & \pred{56} & \pred{58} & \pred{60} & \pred{62} & 64 & 66 &  &  &  &  &  &  &  &  &  &  &  &  &  &  &  \\
45 & \pred{45} & \pred{47} & \pred{49} & \pred{51} & \pred{53} & \pred{55} & \pred{57} & \pred{59} & \pred{61} & \pred{63} & 65 & 67 &  &  &  &  &  &  &  &  &  &  &  &  &  &  &  \\
\bottomrule
\end{tabular}
}

\caption{Known values of $G(g,g+b)$ for $1 \leq g \leq 45$ and $1 \leq b \leq 27$. The blue entries are those predicted by the theorem $g \ge 1.75(b^{3/2}-b)+1$.}
\label{tab:Gggb}
\end{table}

In proving our values of $G(g,g+b)$, it is necessary to introduce LM rulers.
\begin{definition}\label{def:LM ruler}
    A set $\cL$ of integers is an LM ruler if every positive integer $d$ has at most $d-1$ representations as a difference of elements of $\cL$. Let $L(n)$ be the minimum possible diameter of an LM ruler with $n$ elements. 
\end{definition}
Table~\ref{tab:LM rulers} shows the optimal LM ruler for $1\le n \le 17$. Their lengths are sequence~\seqnum{A392517}. We prove
\begin{theorem}
    Suppose that $\{0=\ell_1<\ell_2<\dots<\ell_n\}$ is a LM ruler. Then \[\ell_n \ge \sqrt{8/9} \cdot (n-1)^{3/2}.\] 
    Moreover, there is an infinite LM ruler $\{0=h_1<h_2<\dots<h_n<\dots\}$ with $h_n \le \frac{7}{4}\left((n+1)^{3/2}-(n+1)\right)$. In particular,
    \[\sqrt{8/9} \cdot (n-1)^{3/2} \le L(n) \le \frac{7}{4}\left((n+1)^{3/2}-(n+1)\right).\] 
\end{theorem}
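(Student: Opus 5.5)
The lower bound comes from looking only at the consecutive gaps. The upper bound needs an explicit infinite ruler with slowly growing gaps.

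\textbf{Lower bound.} Let $\{0=\ell_1<\dots<\ell_n\}$ be an LM ruler and set $s_i=\ell_{i+1}-\ell_i$ for $1\le i\le n-1$, so that $\ell_n=\sum s_i$. Each gap is a difference of elements of the ruler. Hence, for every $d$, at most $d-1$ of the gaps equal $d$. In particular no gap equals $1$, at most one gap equals $2$, at most two equal $3$, and so on.

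It follows that $\ell_n$ is at least the sum of the $N:=n-1$ smallest elements of the multiset in which each $d\ge 2$ appears exactly $d-1$ times. Let $m$ be maximal with $\binom{m}{2}\le N$. That sum is
\[
\sum_{d=2}^{m} d(d-1)+(m+1)\Bigl(N-\tbinom{m}{2}\Bigr)=\frac{(m+1)m(m-1)}{3}+(m+1)\Bigl(N-\tbinom{m}{2}\Bigr).
\]
We must show this is at least $\sqrt{8/9}\,N^{3/2}$. I would treat the right side as a piecewise linear function of $N$ on $[\binom m2,\binom{m+1}2]$. Because $\sqrt{8/9}N^{3/2}$ is convex, it suffices to check the inequality at the breakpoints $N=\binom m2$. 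There it reduces to
\[
\tfrac{(m+1)m(m-1)}{3}\ge \tfrac{\sqrt8}{3}\Bigl(\tfrac{m(m-1)}{2}\Bigr)^{3/2},
\]
that is, $(m+1)^2\ge m(m-1)$ after squaring, which is obvious. Thus the bound holds at every breakpoint and, by convexity, for all $N$. This gives $\ell_n\ge\sqrt{8/9}(n-1)^{3/2}$.

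\textbf{Upper bound.} I would build $h$ by a nondecreasing gap sequence in blocks. Gap value $k$ (for $k\ge 2$) is used about $\lfloor k/2\rfloor$ times consecutively, or with some similar multiplicity $c(k)\approx k/2$. With $n$ elements the largest gap is then $m\approx 2\sqrt n$. The diameter is about $\sum_{k\le m}k^2/2\approx m^3/6\approx\frac43 n^{3/2}$, which is below $\frac74 n^{3/2}$. I would then choose the exact multiplicities, and possibly leading small blocks, so that the closed form $\frac74\bigl((n+1)^{3/2}-(n+1)\bigr)$ dominates $h_n$ for every $n$. This check is again done at block boundaries and then interpolated, as in the lower bound.

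\textbf{Main obstacle.} The hard step is verifying the LM property for the constructed ruler. A difference $D$ can arise as a sum of several consecutive gaps, possibly straddling several blocks. I would count representations of $D$ by noting that consecutive pairs $(h_i,h_j)$ with $h_j-h_i=D$ have $i$ strictly increasing as $j$ increases, since the gaps are nondecreasing. For a fixed number $t$ of spanned gaps there are at most about $c(D/t)$ such windows, and these lie essentially within the block of gap $\approx D/t$ and its neighbours. Summing over $t$ gives roughly $\sum_t c(D/t)\approx \frac{D}{2}\sum_{t\le\sqrt{2D}}\frac1t$, which is too large if taken naively. So the real argument must use that windows of $t$ gaps summing exactly to $D$ exist only when $D$ is nearly a multiple of the local gap size.

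If this counting does not close, the fallback is to define $h$ greedily, appending the least integer that keeps the LM property. One would then prove that the greedy gap after $n$ elements is at most the derivative of the target bound, about $\frac{21}{8}\sqrt{n+1}$. This works by showing that only a bounded proportion of candidate positions is blocked by saturated differences.
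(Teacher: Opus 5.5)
Your lower bound is correct. Bounding $\ell_n$ below by the sum of the $n-1$ smallest entries of the multiset in which each $d\ge 2$ occurs $d-1$ times, and then interpolating between the breakpoints $N=\binom m2$ by convexity, is a tidy variant of the paper's argument. The paper instead shows that the $i$-th smallest gap exceeds $\sqrt{2i}$ and compares $\sqrt2\sum_{i<n}\sqrt i$ with an integral.

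The upper bound, however, has a genuine gap: you never establish the LM property, and the block construction you propose is in fact not an LM ruler. Take $c(k)=\lfloor k/2\rfloor$ and $D=144$. The windows of $t$ consecutive gaps lying inside the block of gaps equal to $144/t$ contribute $c(144/t)-t+1$ representations each, for $t=1,2,3,4,6,8$. That gives $72+35+22+15+7+2=153>143$.

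The defect is structural, not a matter of constants. Constant blocks make $h_{i+t}-h_i$ constant on a plateau of length $c(D/t)-t+1$ whenever $t\mid D$. So with $c(k)\approx\alpha k$ you get $r(D)\gtrsim \alpha D\sum_{t\mid D,\,t\le T}1/t-O(T^2)$. Taking $D=\mathrm{lcm}(1,\dots,T)$ makes this exceed $D$ for every fixed $\alpha>0$. Your hoped-for rescue, that $D$ must be nearly a multiple of the local gap, cannot work: highly divisible $D$ are exact multiples of many local gap sizes at once. The greedy fallback is only an assertion; nothing is offered toward the claimed gap bound $\frac{21}{8}\sqrt{n+1}$.

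The missing idea is to avoid plateaus by taking the floor of a strictly convex function. The paper uses $h_0=0$ and $h_m=\lfloor\varphi(m)\rfloor$ with $\varphi(x)=C\bigl((x+2)^{3/2}-(x+2)\bigr)$ and $C=\frac74$. Then $h_{m+k}-h_m=d$ forces $W_k(m):=\varphi(m+k)-\varphi(m)\in(d-1,d+1)$. Since $W_k$ is increasing and concave, with $W_k'>\frac{9C^2k^2}{8(d+1+Ck)}$ at the right end of that interval, the number of such $m$ is at most $\frac{16(d+1)}{9C^2k^2}+\frac{16}{9Ck}+1$.

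This count decays like $1/k^2$ rather than the $1/k$ your plateaus produce. Summing over $k\le K=O(d^{2/3})$ gives $r(d)\le\frac{8\pi^2}{27C^2}(d+1)+O(d^{2/3})$. This is below $d$ for large $d$ precisely because $C>\frac{2\pi\sqrt2}{3\sqrt3}\approx 1.71$, and the paper checks the remaining range $d\le 4475$ by direct computation.
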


The connection between LM rulers and $g$-Golomb rulers is given in Theorem~\ref{theorem:main}, which is the main result of this work.
\begin{theorem}\label{theorem:main}
    If $g\ge L(b-1)+1$, then $G(g,g+b)=g+2b-2$. In particular, if $g \ge \frac{7}{4}\left(b^{3/2}-b\right)+1$, then $G(g,g+b)=g+2b-2$.
\end{theorem}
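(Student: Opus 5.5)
The plan is to work with the set of integers in $[0,D]$ that are \emph{not} marks, and to write every difference count of the ruler in terms of that complement. Let $\cG\subseteq[0,D]$ with $0,D\in\cG$ and $|\cG|=g+b$, and let $\cal M=[0,D]\setminus\cG$, so $k:=|\cal M|=D+1-g-b$. For $d\ge1$ let $r_X(d)$ be the number of pairs $(x,x+d)\in X\times X$. Inclusion–exclusion on the $D+1-d$ pairs $(x,x+d)$ in $[0,D]$ gives the identity
\[
r_{\cG}(d)=D+1-d-A_d-B_d+r_{\cal M}(d),
\]
where $A_d=\#\{m\in\cal M: m+d\le D\}$ and $B_d=\#\{m\in\cal M: m\ge d\}$. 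Both bounds below come from this identity.

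\textbf{Lower bound $G(g,g+b)\ge g+2b-2$.} Suppose $D\le g+2b-3$. Since $|\cG|=g+b$ forces $D\ge g+b-1$, we have $0\le k\le b-2$. At $d=1$ the identity gives
\[
r_{\cG}(1)\ge D-2k=2g+2b-2-D\ge g+1,
\]
so $\cG$ is not a $g$-Golomb ruler. This direction needs no hypothesis on $g$.

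\textbf{Upper bound $G(g,g+b)\le g+2b-2$.} Take $D=g+2b-2$, so $k=b-1$. Let $\cal M=s+\cL$, where $\cL$ is an optimal LM ruler with $b-1$ elements and diameter $\ell=L(b-1)\le g-1$. I would try the shift $s=b-1$ first, so that $\cal M\subseteq[b-1,\,b-1+\ell]$ and $D-(b-1+\ell)\ge b$. The marks are $\cG=[0,D]\setminus\cal M$, which contains $0$ and $D$ and has $g+b$ elements. The range of $d$ splits into three parts.
\begin{itemize}
\item For $d\le b-1$, every $m\in\cal M$ satisfies both $m\ge d$ and $m+d\le D$. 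Hence $A_d=B_d=b-1$ and
\[
r_{\cG}(d)=g+1-d+r_{\cal M}(d)\le g,
\]
which is exactly the LM condition $r_{\cal M}(d)\le d-1$. This is where the LM definition enters.
\item For $d\ge 2b-1$, we have $r_{\cG}(d)\le D+1-d\le g$ trivially.
\item For $b\le d\le 2b-2$, use $r_{\cal M}(d)\le\min(A_d,B_d)$, which gives $r_{\cG}(d)\le D+1-d-\max(A_d,B_d)$. It then suffices to show $\max(A_d,B_d)\ge 2b-1-d$. Since $g$ is large compared with $b$, we have $d\le D/2$, so every $m$ satisfies $m\ge d$ or $m\le D-d$. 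Because $\cal M$ starts at $b-1$ and ends at least $b$ below $D$, the elements failing one condition lie in short end-windows of length less than $d$.
\end{itemize}

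\textbf{Main obstacle.} The middle range $b\le d\le 2b-2$ is the step I expect to be hardest. The crude bound above may fall short, in which case I would return to the exact identity and use the LM bound $r_{\cal M}(d)\le d-1$ together with the count of elements of $\cal M$ lying in $[0,d)\cup(D-d,D]$. If the shift $s=b-1$ does not make this work, I would re-tune $s$, for instance by centring $\cal M$ in $[0,D]$ using the slack $D-\ell\ge 2b-1$. Everything else reduces to the identity and the LM property.

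For the \emph{in particular} clause, apply the earlier theorem's bound $L(b-1)\le\frac74\bigl(b^{3/2}-b\bigr)$, taken at $n=b-1$. Then $g\ge\frac74\bigl(b^{3/2}-b\bigr)+1$ implies $g\ge L(b-1)+1$, and the main statement applies.
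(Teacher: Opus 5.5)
Your approach is the paper's proof up to the reflection $x\mapsto D-x$, which preserves every difference count. The paper puts the holes in $[b,g+b-1]$ and handles $b\le d\le 2b-2$ using what you call $B_d$. Your shift $s=b-1$ puts them in the mirror interval $[b-1,g+b-2]$, where the same count goes through with $A_d$ instead.

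The step you flag as the main obstacle needs no re-tuning of $s$ and closes with your own crude bound. Since $\max\mathcal{M}\le b-1+\ell\le g+b-2=D-b$, the elements of $\mathcal{M}$ with $m+d>D$ lie in $[D-d+1,\,D-b]$, which contains $d-b$ integers. Hence $A_d\ge (b-1)-(d-b)=2b-1-d$, and so $r_{\mathcal{G}}(d)\le D+1-d-A_d\le g$.

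With your placement, the guaranteed bound on $B_d$ alone is one short, because the left window $[b-1,d-1]$ has $d-b+1$ integers; that is probably why the estimate looked tight. Your remark that $d\le D/2$ is not needed, and it can fail for small parameters (e.g.\ $b=2$, $g=1$), but nothing depends on it.

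Your lower bound is correct. Your indexing $n=b-1$ for the ``in particular'' clause is also correct.
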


In Section~\ref{sec:lowerbound} we prove $G(g,g+b) \ge g +2b-2$. In Section~\ref{sec:exact constructions} we explore the specific cases of $b=1,2,3,4$. In Section \ref{sec:LM rulers} we develop an upper and lower bound for a LM ruler (defined below). We then use the results of Section \ref{sec:LM rulers} in Section~\ref{sec:upperbound} to prove that $G(g,g+b) \le g+2b-2$.

\section{Notation and Definitions}\label{sec:notation}

For any $m \in \mathbb{N}$, the discrete interval $[0,m]$ is defined as the set of non-negative integers,
\[ [0,m] := \{k \in \mathbb{Z} : 0 \le k \le m\}. \]
The diameter of a set $\cA$ is given by
\[ \diam(\cA) := \max \cA - \min \cA.\]

For a set $\cA \subset \ZZ$, the representation function $r_{\cA-\cA}(d)$ denotes the number of times a distance $d \in \mathbb{Z}^+$ occurs as a difference between distinct elements of $A$. Namely,
\[ r_{\cA-\cA}(d) := \#\{(a_1, a_2) \in A \times A : a_2 - a_1 = d\}. \]
With this notation, we can restate Definition~\ref{def:g-Golomb ruler} as: a set $\cG$ is a $g$-Golomb ruler if for all $d\ge1$ we have $r_{\cG-\cG}(d)\le g$. We can restate Definition~\ref{def:LM ruler} as: a set $\cL$ is an LM ruler if for all $d\ge 1$ we have $r_{\cL-\cL}(d)\le d-1$.

We have found that to understand the elements of a $g$-Golomb ruler, it is essential to understand the elements that are missing from the ruler. 
\begin{definition}[Holes]
    The set $\cH(\cA)$ of \emph{holes} of a finite set $\cA$ of integers is the set of integers between the minimum and maximum of $\cA$ that are not in $\cA$. That is, \[\cH(\cA) := \left[\min\cA,\max\cA\right] \setminus \cA.\]
\end{definition}
Namely, we will find that the holes of a minimum-diameter $g$-Golomb ruler are an LM ruler.

\section{A lower bound on the diameter of \texorpdfstring{$g$}{g}-Golomb rulers}
\label{sec:lowerbound}

\begin{lemma}\label{lem:min bound}
Let $g,b$ be positive integers, and let $\cG $ be a $g$-Golomb ruler with $g+b$ elements. Then 
\[ \diam(\cG) \ge g + 2b - 2. \]
\end{lemma}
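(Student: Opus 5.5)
The plan is to look only at the difference $d=1$ and count how many consecutive pairs of integers must both lie in $\cG$. After translating, assume $\min\cG=0$ and write $D=\diam(\cG)$, so that $\cG\subseteq[0,D]$ and $\cG$ contains both endpoints. Since $|\cG|=n=g+b$, the set of holes has size
\[ |\cH(\cG)| = (D+1)-n. \]

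Write the elements as $0=a_1<a_2<\dots<a_n=D$. The $n-1$ consecutive gaps $a_{i+1}-a_i$ are positive and add up to $D$. Every gap of size exactly $1$ gives a representation of $1$ as a difference, so $r_{\cG-\cG}(1)$ is the number of gaps equal to $1$. Every gap larger than $1$ contains at least one hole strictly between $a_i$ and $a_{i+1}$, and different gaps contain different holes. So the number of gaps larger than $1$ is at most $|\cH(\cG)|$. This gives
\[ r_{\cG-\cG}(1) \ \ge\ (n-1) - |\cH(\cG)| \ =\ (n-1)-(D+1-n) \ =\ 2n-D-2. \]

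Because $\cG$ is a $g$-Golomb ruler, $r_{\cG-\cG}(1)\le g$. Hence $2(g+b)-D-2\le g$, which rearranges to $D\ge g+2b-2$, as claimed.

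I do not expect a real obstacle here. The only point needing care is the injection from "gaps of size $>1$" into holes, which holds because the open intervals $(a_i,a_{i+1})$ are disjoint. The bound is trivially true when $2n-D-2\le 0$, so no separate case analysis is needed. The equality case, where every hole is isolated and there are exactly $b-1$ holes with $D=g+2b-2$, is what links this bound to the structure of the hole set used later in the paper.
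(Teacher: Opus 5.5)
Your proof is correct and is essentially the paper's argument: both look only at $d=1$ and use the hole count $\diam(\cG)+1-(g+b)$ to obtain $r_{\cG-\cG}(1)\ge 2(g+b)-\diam(\cG)-2$, which then contradicts (or directly bounds against) $r_{\cG-\cG}(1)\le g$. The only difference is bookkeeping. The paper argues by contradiction, starting from the $\diam(\cG)$ unit pairs of the full interval and subtracting at most two per hole. You instead count the $n-1$ consecutive gaps and inject the non-unit gaps into the holes, which yields the identical bound directly.
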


\begin{proof}
Suppose for contradiction that $\alpha:=\diam(\cG) \le g + 2b - 3$.
The discrete interval $[0,\alpha]$ contains exactly $\alpha + 1$ integers. Therefore, the exact number of holes $\cH=\cH(\cG)$ is
\[ |\cH| = (\alpha + 1) - |\cG| = \alpha + 1 - g - b. \]

We analyze the representation function for $d = 1$. In the complete interval $[0,\alpha]$, there are $\alpha$ pairs of distance 1. Each hole $h \in \cH $ removes at most two such pairs: $(h-1, h)$ and $(h, h+1)$. Therefore,
\begin{align*}
    r_{\cG -\cG }(1) 
    &\ge \alpha - 2|\cH | \\
    &= \alpha - 2(\alpha + 1 - g - b) \\
    &= 2g + 2b - \alpha - 2 \\
    &\ge 2g + 2b - (g + 2b - 3) - 2 \\
    &= g+1.
\end{align*}
Since $g+1 > g$, this contradicts the definition of a $g$-Golomb ruler. Hence, $\alpha \ge g+2b-2$.
\end{proof}

\section{Exact constructions of \texorpdfstring{$g$}{g}-Golomb rulers for small \texorpdfstring{$b$}{b}}\label{sec:exact constructions}

\begin{lemma}\label{lem:small_b}
If $b\in\{1,2\}$ and $g\ge 1$, or $b=3$ and $g \ge2$, or $b=4$ and $g \ge 4$, then the minimum diameter of a $g$-Golomb ruler with $g+b$ marks is $G(g, g+b) = g + 2b - 2$.
\end{lemma}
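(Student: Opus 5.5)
By Lemma~\ref{lem:min bound} we already have $G(g,g+b)\ge g+2b-2$, so it suffices to exhibit, for each listed pair $(g,b)$, a $g$-Golomb ruler with $g+b$ marks and diameter $N:=g+2b-2$. Such a ruler is $[0,N]$ with exactly $b-1$ holes removed. The plan is to choose the hole set $\cH$ explicitly and to check $r_{\cG-\cG}(d)\le g$ for every $d$ with a single counting identity.

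For $1\le d\le N$ the interval $[0,N]$ contains $N+1-d$ pairs at distance $d$. A pair $(x,x+d)$ is destroyed when $x\in\cH$ or $x+d\in\cH$. By inclusion--exclusion,
\[ r_{\cG-\cG}(d)=N+1-d-\sum_{h\in\cH}\bigl([h\ge d]+[h\le N-d]\bigr)+r_{\cH-\cH}(d). \]
If every hole satisfies $d\le h\le N-d$, this becomes $g+1-d+r_{\cH-\cH}(d)$. That quantity is at most $g$ exactly when $r_{\cH-\cH}(d)\le d-1$, which is the LM condition on $\cH$. For $d\ge 2b-1$ the trivial bound $r\le N+1-d\le g$ already suffices. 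So only $d\le 2b-2$ needs checking, and for those $d$ we must verify that the deficit from holes near the ends is compensated.

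The cases are as follows. For $b=1$, take no holes: $\cG=[0,g]$ has $r(d)=g+1-d\le g$. For $b=2$, take $\cH=\{1\}$ with $N=g+2$. Then $d=1$ loses two pairs, giving $r(1)=g$; $d=2$ loses the pair $(1,3)$, giving $r(2)\le g$; and $d\ge3$ is trivial. For $b=3$ ($N=g+4$, $g\ge2$), take $\cH=\{2,4\}$, which is an LM set since its only difference is $2$. One checks that the number of destroyed pairs at distance $d$ is $4,3,2,1$ for $d=1,2,3,4$; for example, at $d=3$ the pairs $(1,4)$ and $(2,5)$ are destroyed, and these exist even when $g=2$. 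Since $5-d$ destroyed pairs are needed, this suffices. For $b=4$ ($N=g+6$, $g\ge4$), take a translate of the LM set $\{0,2,5\}$ (differences $2,3,5$, each once), say $\cH=\{3,5,8\}$ with $N\ge 10$. For each $d\le 6$ one then counts the destroyed pairs and checks that there are at least $7-d$ of them, adjusting the translate if a boundary term is lost.

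The main obstacle is the $b=4$ verification at the smallest allowed $g$, namely $g=4$ and $N=10$. There the holes cannot lie at distance $\ge 6$ from both ends, so some boundary indicators $[h\ge d]$ and $[h\le N-d]$ vanish. Each such lost term must be offset by slack in the LM inequality, that is, by having $r_{\cH-\cH}(d)<d-1$. I would first check $d=1,\dots,6$ by hand for the middle placement $\{3,5,8\}$, whose reflection is $\{2,5,7\}$. If some $d$ fails, I would try the other arrangements of a $3$-element LM set inside $[1,9]$, keeping in mind that the table confirms $G(4,8)=10$, so a valid choice exists.
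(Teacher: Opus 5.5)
Your approach is the paper's. Both get the lower bound from Lemma~\ref{lem:min bound} and the upper bound from explicit rulers $[0,g+2b-2]\setminus\cH$ with $|\cH|=b-1$. The paper lists holes near the right end ($\{g+1\}$, $\{g,g+3\}$, $\{g-1,g+2,g+4\}$) and calls the check straightforward. You place holes near the left end and organize the check through your inclusion--exclusion identity, which is the same count the paper later uses in Lemma~\ref{lem:min distance}. Your cases $b=1,2,3$ are correct. For $b=3$ your destroyed-pair counts are really lower bounds rather than exact values; at $d=4$, for instance, both $(0,4)$ and $(2,6)$ are destroyed. Only ``at least $5-d$'' matters, so this is harmless.

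The gap is $b=4$, which you leave as a check to be done, with a fallback to the table. That fallback is not an argument: the table is computational and stops at $g=45$, and a valid ruler for $g=4$ alone does not give the family for all $g\ge 4$. No fallback is needed, though, because $\cH=\{3,5,8\}$ works.

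With $N=10$, your identity gives destroyed counts $\sum_{h\in\cH}([h\ge d]+[h\le N-d])-r_{\cH-\cH}(d)$ equal to $6,5,4,4,3,2$ for $d=1,\dots,6$. The required counts are $7-d=6,5,4,3,2,1$. Equivalently, $\cG=\{0,1,2,4,6,7,9,10\}$ has $r_{\cG-\cG}(d)=4,4,4,3,3,3$ for $d\le 6$, and $d\ge 7$ is trivial.

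To pass from $g=4$ to all $g\ge 4$, use monotonicity in $N$ and state it explicitly. For fixed $\cH$, the terms $[h\ge d]$ and $r_{\cH-\cH}(d)$ do not depend on $N$, and the indicators $[h\le N-d]$ can only switch on as $N$ grows. So the destroyed count is nondecreasing in $N$ while the requirement $7-d$ stays fixed. This same remark is what justifies checking your $b=2,3$ constructions only at the smallest allowed $g$. With this written out, the proof is complete.
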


\begin{proof}
Applying the general lower bound from Lemma \ref{lem:min bound}, we know that $G(g, g+b) \ge g + 2b - 2$. To show equality, we give minimal sets $\cG $ for each case (the shown sets are not always the unique sets with minimum diameter). It is straightforward to verify that these constructions satisfy $r_{\cG -\cG }(d) \le g$ for all valid distances and are therefore minimal $g$-Golomb rulers.

\[
\begin{array}{|r|cclc|}\hline
\text{valid $g$} &b & n & \text{$g$-Golomb Ruler}& G(g,g+b) \\ \hline
g \ge 1 & 1 & g+1 & [0, g] & g \\
g \ge 1 & 2 & g+2 & [0, g] \cup \{g+2\} & g+2 \\
g \ge 2 & 3 & g+3 & [0, g-1] \cup \{g+1, g+2\} \cup \{g+4\} & g+4\\
g \ge 4& 4 & g+4 & [0, g-2] \cup \{g, g+1\} \cup \{g+3\} \cup \{g+5, g+6\} & g+6 \\ \hline
\end{array}
\]

We show these rulers visually in Figure~\ref{fig:visualizing small b}.
\end{proof}
\begin{figure}[H]
\begin{center}
\resizebox{0.95\textwidth}{!}{
\begin{tikzpicture}[
    scale=1.0,
    dot/.style={circle, fill=black, inner sep=0pt, minimum size=5pt},
    open/.style={circle, draw=black, fill=white, inner sep=0pt, minimum size=5pt},
    >=stealth
  ]

  \def\rowsep{-1.2}

  \def\y{0}
  \node[left] at (-0.5, \y) {$b=1$};
  \draw[->] (-0.3, \y) -- (13.0, \y);
  \fill[black!15] (0, \y-0.18) rectangle (5.5, \y+0.18);
  \draw[thick]    (0, \y-0.18) rectangle (5.5, \y+0.18);
  \node[dot] at (0,   \y) {};
  \node[dot] at (5.5, \y) {};
  \node[below=6pt] at (0,   \y) {$0$};
  \node[below=6pt] at (5.5, \y) {$g$};
  \node at (2.25, \y) {$\cdots$};

  \def\y{\rowsep}
  \node[left] at (-0.5, \y) {$b=2$};
  \draw[->] (-0.3, \y) -- (13.0, \y);
  \fill[black!15] (0, \y-0.18) rectangle (5.5, \y+0.18);
  \draw[thick]    (0, \y-0.18) rectangle (5.5, \y+0.18);
  \node[dot] at (0,   \y) {};
  \node[dot] at (5.5, \y) {};
  \node[below=6pt] at (0,   \y) {$0$};
  \node[below=6pt] at (5.5, \y) {$g$};
  \node at (2.25, \y) {$\cdots$};
  \node[open] at (6.5, \y) {};
  \node[below=6pt] at (6.5, \y) {$g{+}1$};
  \node[dot] at (7.5, \y) {};
  \node[below=6pt] at (7.5, \y) {$g{+}2$};

  \def\y{2*\rowsep}
  \node[left] at (-0.5, \y) {$b=3$};
  \draw[->] (-0.3, \y) -- (13.0, \y);
  \fill[black!15] (0, \y-0.18) rectangle (4.5, \y+0.18);
  \draw[thick]    (0, \y-0.18) rectangle (4.5, \y+0.18);
  \node[dot] at (0,   \y) {};
  \node[dot] at (4.5, \y) {};
  \node[below=6pt] at (0,   \y) {$0$};
  \node[below=6pt] at (4.5, \y) {$g{-}1$};
  \node at (1.75, \y) {$\cdots$};
  \node[open] at (5.5, \y) {};
  \node[below=6pt] at (5.5, \y) {$g$};
  \node[dot] at (6.5, \y) {};
  \node[dot] at (7.5, \y) {};
  \node[below=6pt] at (6.5, \y) {$g{+}1$};
  \node[below=6pt] at (7.5, \y) {$g{+}2$};
  \node[open] at (8.5, \y) {};
  \node[below=6pt] at (8.5, \y) {$g{+}3$};
  \node[dot] at (9.5, \y) {};
  \node[below=6pt] at (9.5, \y) {$g{+}4$};

  \def\y{3*\rowsep}
  \node[left] at (-0.5, \y) {$b=4$};
  \draw[->] (-0.3, \y) -- (13.0, \y);
  
  \fill[black!15] (0, \y-0.18) rectangle (3.5, \y+0.18);
  \draw[thick]    (0, \y-0.18) rectangle (3.5, \y+0.18);
  \node[dot] at (0,   \y) {};
  \node[dot] at (3.5, \y) {};
  \node[below=6pt] at (0,   \y) {$0$};
  \node[below=6pt] at (3.5, \y) {$g{-}2$};
  \node at (1.25, \y) {$\cdots$};

  \node[open] at (4.5, \y) {};
  \node[below=6pt] at (4.5, \y) {$g{-}1$};

  \node[dot] at (5.5, \y) {};
  \node[dot] at (6.5, \y) {};
  \node[below=6pt] at (5.5, \y) {$g$};
  \node[below=6pt] at (6.5, \y) {$g{+}1$};

  \node[open] at (7.5, \y) {};
  \node[below=6pt] at (7.5, \y) {$g{+}2$};

  \node[dot] at (8.5, \y) {};
  \node[below=6pt] at (8.5, \y) {$g{+}3$};

  \node[open] at (9.5, \y) {};
  \node[below=6pt] at (9.5, \y) {$g{+}4$};
  
  \node[dot] at (10.5,  \y) {};
  \node[dot] at (11.5, \y) {};
  \node[below=6pt] at (10.5,  \y) {$g{+}5$};
  \node[below=6pt] at (11.5, \y) {$g{+}6$};
\end{tikzpicture}}
\end{center}
\caption{Visualization of the optimal $g$-Golomb rulers with $g+b$ marks for $1\le b \le 4$.}
\label{fig:visualizing small b}
\end{figure}

\section{Linear multiplicity (LM) rulers}
\label{sec:LM rulers}
In this section, we prove a lower bound and upper bound for the optimal diameter of a set $\cL$ which satisfies the LM ruler criterion.

\subsection{Examples of LM rulers}
To motivate our investigation into the growth of LM rulers, we give examples of LM rulers of sets that achieve the minimum possible diameter for a given size $n$ in Table~\ref{tab:LM rulers}. These sets were verified computationally to satisfy the LM ruler condition $r_{\cL-\cL}(d) \leq d-1$ for all positive integers $d$. Sequence \seqnum{A392517} gives the minimum diameter of an LM ruler with $n$ marks.
\begin{table}[p]
\centering
\begin{tabular}{|c|c|l|}
\hline
$n$ & $\mathrm{diam}(\cL )$ & Optimal LM ruler $\cS $ \\ \hline
1  & 0  & $[0]$ \\
2  & 2  & $[0, 2]$ \\
3  & 5  & $[0, 2, 5]$ \\
4  & 8  & $[0, 2, 5, 8]$ \\
5  & 12 & $[0, 2, 5, 8, 12]$ \\
6  & 16 & $[0, 2, 5, 8, 12, 16]$ \\
7  & 20 & $[0, 2, 5, 8, 12, 16, 20]$ \\
8  & 25 & $[0, 2, 5, 8, 12, 16, 20, 25]$ \\
9  & 30 & $[0, 2, 5, 8, 12, 16, 20, 25, 30]$ \\
10 & 35 & $[0, 2, 5, 8, 12, 16, 20, 25, 30, 35]$ \\
11 & 40 & $[0, 2, 6, 9, 12, 16, 20, 25, 30, 35, 40]$ \\
12 & 46 & $[0, 2, 6, 9, 12, 16, 20, 25, 30, 35, 40, 46]$ \\ 
13 & 52 & $[0, 2, 6, 9, 12, 16, 20, 25, 30, 35, 40, 46, 52]$\\
14 & 58 & $[0, 2, 6, 9, 12, 16, 20, 25, 30, 35, 40, 46, 52, 58]$\\
15 & 64 & $[0, 2, 6, 9, 13, 16, 20, 25, 30, 35, 40, 46, 52, 58, 64]$\\
16 & 70 & $[0, 2, 7, 10, 14, 17, 21, 25, 30, 35, 40, 46, 52, 58, 64, 70]$\\
17 & 77 & $[0, 2, 7, 10, 14, 17, 21, 25, 30, 35, 40, 46, 52, 58, 64, 70, 77]$\\
18 & 84 & $[0, 2, 7, 10, 14, 17, 21, 25, 30, 35, 40, 46, 52, 58, 64, 70, 77, 84]$\\
\hline
\end{tabular}
\caption{Computationally verified optimal diameters for small LM rulers. Note that for $n\le 10$, the greedy set is optimal. For $n=15,16$ we thank Sean Irvine for sharing his computations with us and with the OEIS. For $n=17,18$ we thank Vrinda Inani for sharing her computations with us.}
\label{tab:LM rulers}
\end{table}

In the following two subsections, we formalize the asymptotic behavior suggested by this pattern. In Subsection~\ref{subsec:L upper bound v2}, we establish an upper bound on the minimum diameter of an LM ruler using a specific construction. In Subsection~\ref{sec:L lower bound}, we derive a lower bound. 

\subsection{An upper bound on the minimum diameter of an LM ruler}\label{subsec:L upper bound v2}

We consider the following construction with positive parameter $C$, which we will eventually set to $\frac{7}{4}$. Define
\begin{align*}
    h_0 &:= 0, \\
    h_m &:= \left\lfloor C\bigl((m+2)^{3/2} - (m+2)\bigr)\right\rfloor \quad\text{for } m \geq 1, \\
    \cS &:= \left\{ h_m : m \geq 0 \right\},\\
    \cS_n &:= \left\{ h_m : 0 \leq m \leq n-1 \right\}.
\end{align*}
We will show that $\cS$ is an LM ruler, from which it follows that $\cS_n$ is an LM ruler of size $n$ with diameter $\diam(\cS_n) = h_{n-1} \leq C\bigl((n+1)^{3/2} - (n+1)\bigr)$.

For $x \geq 1$ we write $\varphi(x) := C\bigl((x+2)^{3/2}-(x+2)\bigr)$, so that $h_m = \lfloor \varphi(m)\rfloor$ for $m \geq 1$. We also write $\widetilde{W}_k(x) := C\bigl((x+k+2)^{3/2} - (x+2)^{3/2}\bigr)$ for the difference of the leading terms, and define the continuous window function
\begin{equation*}
    W_k(x) := \varphi(x+k) - \varphi(x) = \widetilde{W}_k(x) - Ck.
\end{equation*}

\begin{lemma} \label{lem:windowv2}
Let $m,k,d$ be positive integers. If $h_{m+k} - h_m = d$, then $W_k(m) \in (d-1, d+1)$.
\end{lemma}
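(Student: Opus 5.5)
Since $m$ and $k$ are positive integers, $m\ge 1$ and $m+k\ge 1$. Both $h_m$ and $h_{m+k}$ are therefore given by the floor formula: $h_m=\lfloor\varphi(m)\rfloor$ and $h_{m+k}=\lfloor\varphi(m+k)\rfloor$. The special value $h_0=0$ never enters the argument. The plan is to write each value as the floor plus a fractional part and subtract.

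Set $\varphi(m)=h_m+\theta_1$ and $\varphi(m+k)=h_{m+k}+\theta_2$, where $\theta_1,\theta_2\in[0,1)$ are the fractional parts. By the definition of the window function,
\[
W_k(m)=\varphi(m+k)-\varphi(m)=(h_{m+k}-h_m)+(\theta_2-\theta_1)=d+(\theta_2-\theta_1).
\]
Because $\theta_1,\theta_2\in[0,1)$, the difference $\theta_2-\theta_1$ lies strictly in $(-1,1)$. Hence $W_k(m)\in(d-1,d+1)$, which is the claim.

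There is essentially no obstacle here. The only point to check is that neither index is $0$, and this follows from the assumption $m\ge 1$. The hypothesis that $d$ is positive is not used; it merely records the setting in which the lemma will be applied, namely counting the pairs at distance $d$ in $\cS$.
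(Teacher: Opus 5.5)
Your proof is correct and matches the paper's argument: write $\varphi(m)$ and $\varphi(m+k)$ as floor plus fractional part, subtract to get $W_k(m)=d+(\theta_2-\theta_1)$, and use that a difference of two fractional parts lies in $(-1,1)$. Your check that $m\ge 1$, so that neither index is $0$ and both values come from the floor formula, is a detail the paper uses without stating it.
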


\begin{proof}
Using $\fracpart{x}$ for the fractional part of $x$, i.e., $x=\floor{x}+\fracpart{x}$, we have
\begin{align*}
    W_k(m) &= \floor{\varphi(m+k)}+\fracpart{\varphi(m+k)} - \floor{\varphi(m)} - \fracpart{\varphi(m)} \\
    &= h_{m+k}-h_m +\fracpart{\varphi(m+k)}-\fracpart{\varphi(m)}.
\end{align*}
The difference of two fractional parts is in $(-1,1)$, and consequently $W_k(m)\in (d-1,d+1)$.
\end{proof}

For the tail pairs $(h_m, h_{m+k})$ with $m \geq 1$, we define
    \[N_k(d) := \# \left\{m \in \mathbb{Z}_{\geq 1} : W_k(m) \in (d-1, d+1)\right\}.\]
Since $\varphi$ is strictly increasing, the element $h_0 = 0$ generates at most one pair $(h_0, h_k)$ with difference $d$ for each $d$. Therefore
\begin{equation*}
    r_{\cS -\cS}(d) \leq 1 + \sum_{k \geq 1} N_k(d).
\end{equation*}

\begin{lemma} \label{lem:propertiesv2}
For all $k \geq 1$, the function $W_k$ is strictly increasing and strictly concave for $x\ge 1$. Furthermore, for all $x \geq 1$,
\[\widetilde{W}_k(x) > \dfrac{3Ck}{4}\bigl((x+2)^{1/2} + (x+k+2)^{1/2}\bigr).\]
\end{lemma}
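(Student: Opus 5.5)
We treat $W_k$ through its derivatives. Put $u=x+2\ge 3$ and $f(u)=u^{3/2}$, so that $\widetilde W_k(x)=C\bigl(f(u+k)-f(u)\bigr)$ and $W_k(x)=\widetilde W_k(x)-Ck$. The term $-Ck$ is constant in $x$, so $W_k$ and $\widetilde W_k$ have the same first and second derivatives. First,
\[
W_k'(x)=\tfrac{3C}{2}\bigl((u+k)^{1/2}-u^{1/2}\bigr),
\]
and this is positive because $t\mapsto t^{1/2}$ is strictly increasing and $k\ge1$. Second,
\[
W_k''(x)=\tfrac{3C}{4}\bigl((u+k)^{-1/2}-u^{-1/2}\bigr),
\]
and this is negative because $t\mapsto t^{-1/2}$ is strictly decreasing. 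Since $C>0$, it follows that $W_k$ is strictly increasing and strictly concave on $x\ge1$, and indeed on all of $x>-2$.

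For the inequality, write $a=u^{1/2}$ and $c=(u+k)^{1/2}$, so $c>a>0$ and $c^2-a^2=k$. Then
\[
\widetilde W_k(x)=C(c^3-a^3)=C(c-a)(c^2+ac+a^2),
\]
while the right-hand side is
\[
\tfrac{3C}{4}k(a+c)=\tfrac{3C}{4}(c-a)(c+a)^2 .
\]
Dividing by the positive quantity $C(c-a)$, the claim reduces to
\[
c^2+ac+a^2>\tfrac34(c+a)^2 .
\]
This is equivalent to $4c^2+4ac+4a^2>3c^2+6ac+3a^2$, that is, to $(c-a)^2>0$. That holds strictly because $k\ge1$ forces $c\ne a$. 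Note that the inequality holds for every $x>-2$, so the hypothesis $x\ge1$ is more than enough.

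The only step needing any thought is the factorization trick: writing $k=c^2-a^2$ turns the comparison into the elementary fact that the quadratic mean exceeds the product, i.e.\ $(c-a)^2>0$. An alternative would be Hermite--Hadamard for the convex function $f'$ on $[u,u+k]$, giving $f(u+k)-f(u)\ge k\,f'\bigl(u+\tfrac k2\bigr)$ followed by concavity of the square root, but the algebraic route is shorter and gives the strict inequality directly. The monotonicity and concavity claims are routine derivative sign checks.
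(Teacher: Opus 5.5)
Your proof is correct. The monotonicity and concavity part is the same derivative sign check the paper does.

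For the inequality you take a different route. You set $a=(x+2)^{1/2}$ and $c=(x+k+2)^{1/2}$, use $k=c^2-a^2$ to pull the common positive factor $C(c-a)$ out of both sides, and reduce the claim to $(c-a)^2>0$. The paper instead writes $\widetilde W_k(x)=C\int_{x+2}^{x+k+2}\tfrac32 t^{1/2}\,dt$. Since the integrand is concave, the chord lies below the graph, so the integral strictly exceeds the trapezoidal value $\tfrac{Ck}{2}\cdot\tfrac32\bigl((x+2)^{1/2}+(x+k+2)^{1/2}\bigr)$.

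Each route buys something different:
\begin{itemize}
\item Your factorization is fully self-contained and makes strictness immediate. It also shows, as you note, that the bound holds for every $x>-2$.
\item The paper's argument explains where the constant $\tfrac34$ comes from: it is $\tfrac12\cdot\tfrac32$, the trapezoid rule applied to the derivative of $t^{3/2}$. The same argument works verbatim for any potential with concave derivative, whereas your factorization relies on the specific exponent $3/2$.
\end{itemize}

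One correction to your closing aside, which your main argument does not use. The function $\tfrac32 t^{1/2}$ (the derivative of your $f$) is concave, not convex. For a concave integrand, Hermite--Hadamard gives $f(u+k)-f(u)\le k\,f'\bigl(u+\tfrac k2\bigr)$, the opposite of the inequality you wrote. For example, $u=3$, $k=1$ gives $2.804<2.806$. The half of Hermite--Hadamard that bounds a concave integrand's integral from below is the trapezoid bound, which is exactly the paper's proof.
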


\begin{proof}
We may write $\widetilde{W}_k(x) = C\int_{x+2}^{x+k+2} \frac{3}{2}\,t^{1/2}\,dt$. Differentiating with respect to $x$:
\begin{align*}
 W_k'(x) = \widetilde{W}_k'(x) &= \frac{3C}{2}\left[(x+k+2)^{1/2} - (x+2)^{1/2}\right] > 0 \\
 W_k''(x) = \widetilde{W}_k''(x) &= \frac{3C}{4}\left[(x+k+2)^{-1/2} - (x+2)^{-1/2}\right] < 0
\end{align*}
The signs follow because $x + k + 2 > x + 2 \ge 3$, so $W_k$ is strictly increasing and strictly concave on $[1, \infty)$.

The lower bound follows from the concavity of $f(t) = \frac{3}{2}t^{1/2}$: the chord from $(x+2,f(x+2))$ to $(x+k+2,f(x+k+2))$ lies below the graph, so the integral exceeds the trapezoidal approximation, giving
\[\widetilde{W}_k(x) > C \cdot \frac{k}{2}\big(f(x+2) + f(x+k+2)\big) = \frac{3Ck}{4}\big((x+2)^{1/2} + (x+k+2)^{1/2}\big).\qedhere\]
\end{proof}

\begin{lemma} \label{lem:cutoffv2}
Let $K$ be the unique positive solution to $C\bigl((K+3)^{3/2} - 3^{3/2} - K\bigr) = d+1$.
For all $k > K$, we have $N_k(d) = 0$.
\end{lemma}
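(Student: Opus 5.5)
The plan is to show that for $k>K$ the window $W_k$ already exceeds $d+1$ at $x=1$. Since $W_k$ is increasing (Lemma~\ref{lem:propertiesv2}), it then exceeds $d+1$ for every $m\ge 1$, so no $m$ can satisfy $W_k(m)\in(d-1,d+1)$, which gives $N_k(d)=0$.

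First I will evaluate the window at $x=1$. Since $\varphi(1)=C(3^{3/2}-3)$ and $\varphi(1+k)=C\bigl((k+3)^{3/2}-(k+3)\bigr)$, we get
\[
W_k(1)=C\bigl((k+3)^{3/2}-3^{3/2}-k\bigr).
\]
Call this $F(k)$, and regard it as a function of a real variable $t\ge 0$:
\[
F(t)=C\bigl((t+3)^{3/2}-3^{3/2}-t\bigr).
\]
Then $F(0)=0$ and
\[
F'(t)=C\bigl(\tfrac32(t+3)^{1/2}-1\bigr)\ge C\bigl(\tfrac32\sqrt3-1\bigr)>0.
\]
So $F$ is strictly increasing from $0$ to $\infty$. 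Since $d+1>0$, the equation $F(K)=d+1$ therefore has a unique positive solution $K$, which justifies the definition of $K$ in the statement.

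Next, take any integer $k>K$. By strict monotonicity of $F$, we get $W_k(1)=F(k)>F(K)=d+1$. By Lemma~\ref{lem:propertiesv2}, $W_k$ is strictly increasing on $[1,\infty)$, so $W_k(m)\ge W_k(1)>d+1$ for every integer $m\ge1$. No such $m$ lies in the set defining $N_k(d)$, hence $N_k(d)=0$.

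There is no real obstacle here. The only points needing care are the positivity of $F'$, which holds because $\tfrac32\sqrt{t+3}\ge\tfrac32\sqrt3>1$, and correctly identifying $W_k(1)$ with the expression in the statement of the lemma.
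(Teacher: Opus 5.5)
Your proposal is correct and matches the paper's proof step for step. You compute $W_k(1)=C\bigl((k+3)^{3/2}-3^{3/2}-k\bigr)$, use strict monotonicity of this expression in $k$ to get $W_k(1)>d+1$ for $k>K$, and then use strict monotonicity of $W_k$ on $[1,\infty)$ to conclude $N_k(d)=0$. One small improvement over the paper: you check $F'>0$ on all of $[0,\infty)$, not just on $[1,\infty)$. This justifies the existence and uniqueness of the positive root $K$, and it covers the small-$d$ case $K<1$, where the paper's comparison $g(k)>g(K)$ implicitly needs monotonicity below $1$.
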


\begin{proof}
Since $W_k(x)$ is strictly increasing for $x \ge 1$, its minimum on $\{m \ge 1\}$ occurs at $m = 1$:
    \[W_k(1) = C\bigl((k+3)^{3/2} - 3^{3/2} - k\bigr).\]
The function $g(x) := C\bigl((x+3)^{3/2} - 3^{3/2} - x\bigr)$ is strictly increasing for $x \ge 1$, since $g'(x) = C\bigl(\frac{3}{2}(x+3)^{1/2} - 1\bigr) > 0$. Therefore, if $k > K$, then $W_k(1) = g(k) > g(K) = d+1$. By monotonicity, $W_k(m) > d+1$ for all $m \ge 1$, and so $N_k(d) = 0$.
\end{proof}

\begin{lemma} \label{lem:boundv2}
For all positive integers $d,k$, we have
  \[N_k(d) \leq \frac{16(d+1)}{9C^2k^2} + \frac{16}{9Ck} + 1.\]
\end{lemma}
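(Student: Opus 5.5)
Since $W_k$ is strictly increasing on $[1,\infty)$ by Lemma~\ref{lem:propertiesv2}, the set $\{x\ge 1 : W_k(x)\in(d-1,d+1)\}$ is an interval $(x_0,x_1)$ (possibly truncated at $1$, or empty). The number of integers $m$ in it is at most $(x_1-x_0)+1$. The plan is therefore to bound the length of this preimage interval of a window of width $2$, using the mean value theorem and a lower bound on $W_k'$. This already accounts for the trailing $+1$ in the claimed bound.

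By the mean value theorem, $2 \ge W_k(x_1)-W_k(x_0) = W_k'(\xi)(x_1-x_0)$ for some $\xi\in(x_0,x_1)$. Because $W_k$ is concave, $W_k'$ is decreasing, so $W_k'(\xi)\ge W_k'(x_1)$. Hence
\[
x_1-x_0 \le \frac{2}{W_k'(x_1)}.
\]
We have the explicit formula
\[
W_k'(x)=\tfrac{3C}{2}\bigl((x+k+2)^{1/2}-(x+2)^{1/2}\bigr)=\frac{3Ck}{2\bigl((x+k+2)^{1/2}+(x+2)^{1/2}\bigr)}.
\]
Write $S:=(x_1+2)^{1/2}+(x_1+k+2)^{1/2}$. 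Then
\[
x_1-x_0\le \frac{4S}{3Ck}.
\]

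It remains to bound $S$ in terms of $d$. This is where the inequality of Lemma~\ref{lem:propertiesv2} is used. At $x_1$ we have $W_k(x_1)\le d+1$, so
\[
\widetilde W_k(x_1)=W_k(x_1)+Ck\le d+1+Ck.
\]
Combined with the lemma's bound $\widetilde W_k(x_1) > \tfrac{3Ck}{4}S$, this gives
\[
S<\frac{4(d+1+Ck)}{3Ck}.
\]
Substituting,
\[
x_1-x_0\le \frac{16(d+1+Ck)}{9C^2k^2}=\frac{16(d+1)}{9C^2k^2}+\frac{16}{9Ck},
\]
which is exactly the claimed bound after adding $1$.

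The step needing care is the boundary behaviour, which I expect to be the main obstacle. If the window is entered below $x=1$ or the preimage is unbounded on the right, $x_1$ may be undefined. In the first case I would replace $x_0$ by $1$, which only shortens the interval. In the second case, $W_k\to\infty$ as $x\to\infty$ (since $W_k'(x)\to 0$ only like $x^{-1/2}$, with $\widetilde W_k(x)\sim \tfrac{3}{2}Ck\,x^{1/2}$), so the preimage is bounded and $x_1$ exists. I would also count integers in the half-open interval $[\lceil x_0\rceil, x_1)$ to justify the bound of length plus one. If the preimage is empty, the bound is trivial.
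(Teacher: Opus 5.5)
Your proof is correct and follows the paper's argument step for step. You bound the count by the length of the preimage interval plus one, use the mean value theorem with concavity to get $x_1-x_0\le 2/W_k'(x_1)$, and bound $W_k'(x_1)$ from below via the inequality of Lemma~\ref{lem:propertiesv2} at the right endpoint, where $\widetilde W_k(x_1)=d+1+Ck$. Your endpoint handling (counting integers in $[\lceil x_0\rceil, x_1)$ when $m=1$ already lies in the window) is slightly more careful than the paper's open-interval count $\#\{m : x_L<m<x_R\}$.
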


\begin{proof}
If $W_k(1) \ge d+1$, then $N_k(d)=0$ by monotonicity; the bound holds trivially. We now assume $W_k(1) < d+1$.

Since $W_k$ is continuous, strictly increasing, and unbounded, the condition $W_k(x) \in (d-1, d+1)$ defines a unique maximal open interval $(x_L, x_R) \subseteq (1,\infty)$, where $W_k(x_L) = \max\{W_k(1),d-1\}$ and $W_k(x_R) = d+1$. We have
    \begin{align*}
    N_k(d)&=\#\{ m \in \NN: x_L < m < x_R\} \le x_R-x_L+1.
    \end{align*}

Since $W_k(x_L) \geq d-1$, we have $W_k(x_R) - W_k(x_L) \leq 2$. By the Mean Value Theorem, there exists $\xi \in (x_L, x_R)$ with $W_k'(\xi)(x_R - x_L) \leq 2$. Since $W_k$ is concave, $W_k'(\xi) > W_k'(x_R)$, giving
\begin{equation} \label{eq:discrete_boundv2}
    N_k(d) \leq x_R - x_L + 1 \leq \frac{2}{W_k'(x_R)} + 1.
\end{equation}

To bound $W_k'(x_R)$ from below, we use the derivative in the form
\begin{equation} \label{eq:derivativev2}
    W_k'(x) = \frac{3Ck/2}{(x+k+2)^{1/2} + (x+2)^{1/2}}.
\end{equation}
At the right boundary, $W_k(x_R) = d+1$ means $\widetilde{W}_k(x_R) = d+1+Ck$. Lemma~\ref{lem:propertiesv2} applied at $x_R$ gives
    \[\frac{3Ck}{4}\bigl((x_R+2)^{1/2} + (x_R+k+2)^{1/2}\bigr)< d+1+Ck,\]
from which we conclude that
    \[(x_R+2)^{1/2} + (x_R+k+2)^{1/2} < \frac{4(d+1+Ck)}{3Ck}.\]
The derivative $W_k'(x_R)$ given in Equation~\eqref{eq:derivativev2} now yields
\[W_k'(x_R) > \frac{3Ck/2}{{4(d+1+Ck)}/({3Ck})} = \frac{9C^2k^2}{8(d+1+Ck)}.\]
Inserting into~\eqref{eq:discrete_boundv2}:
    \[N_k(d) \leq \frac{16(d+1+Ck)}{9C^2k^2} + 1 = \frac{16(d+1)}{9C^2k^2} + \frac{16}{9Ck} + 1.\qedhere\]
\end{proof}

\begin{theorem}\label{theorem:lambda_boundv2}
For $C = \frac{7}{4}$, the set $\cS  = \{h_m : m \ge 0\}$ is an LM ruler. Consequently, for each $n \ge 1$, $\cS_n$ is an LM ruler of size $n$ with
\[
\diam(\cS_n) \le \frac{7}{4}\bigl((n+1)^{3/2} - (n+1)\bigr).
\]
\end{theorem}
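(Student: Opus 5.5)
The plan is to prove $r_{\cS-\cS}(d)\le d-1$ for every $d\ge 1$ by combining the inequality $r_{\cS-\cS}(d)\le 1+\sum_{k\ge1}N_k(d)$ with Lemma~\ref{lem:cutoffv2} and Lemma~\ref{lem:boundv2}. This handles all large $d$ analytically, and a finite computation covers the remaining small $d$. The diameter statement is then immediate. Any subset of an LM ruler is an LM ruler, so $\cS_n$ is one. Also $h_m$ is strictly increasing, so $\diam(\cS_n)=h_{n-1}=\lfloor\varphi(n-1)\rfloor\le \frac74\bigl((n+1)^{3/2}-(n+1)\bigr)$ for $n\ge2$, and the case $n=1$ is trivial.

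For the analytic part, fix $d$ and let $K=K(d)$ be as in Lemma~\ref{lem:cutoffv2}. Then only $1\le k\le K$ contribute. Summing Lemma~\ref{lem:boundv2} over these $k$ gives
\[
r_{\cS-\cS}(d)\le 1+\frac{16(d+1)}{9C^2}\cdot\frac{\pi^2}{6}+\frac{16}{9C}\bigl(1+\ln K\bigr)+K .
\]
With $C=\frac74$ the leading coefficient is $\frac{16}{9C^2}\cdot\frac{\pi^2}{6}=\frac{256}{441}\cdot\frac{\pi^2}{6}\approx 0.955<1$. This is exactly why $C=\frac74$ is chosen: the main term is a constant multiple of $d$ strictly below $d$.

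The remaining terms are lower order. From the defining equation, $C(K+3)^{3/2}\approx d$, so $K=O(d^{2/3})$; explicitly, $K\le (d+1)^{2/3}C^{-2/3}+c$ for a small constant $c$. Hence the right-hand side is $0.955\,d+O(d^{2/3})$. I would then find an explicit threshold $D_0$ beyond which this is at most $d-1$. The margin is only about $0.045\,d$ against $K\sim 0.69\,d^{2/3}$, so $D_0$ will be large, roughly of order $10^5$.

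This is the main obstacle: the crude bound only works for very large $d$. To lower $D_0$, I would first sharpen the estimates. One improvement is to use that $N_k(d)$ is an integer, so $N_k(d)\le\lfloor\cdot\rfloor$. Another is to replace the "$+1$" in $N_k(d)\le x_R-x_L+1$ by an exact count for the $k$ where $x_R-x_L<1$, where $N_k(d)\le1$. A third is to note that for $k$ near $K$ the interval $(x_L,x_R)$ starts at $x_L=1$.

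For $d<D_0$, I would verify $r_{\cS-\cS}(d)\le d-1$ by direct computation. This is a finite check, because the gaps $h_{m+1}-h_m$ are increasing to infinity, so only finitely many pairs $(h_m,h_{m+k})$ have difference below $D_0$. The smallest cases can be checked by hand as sanity checks. For instance, $h_1=\lfloor\frac74(3^{3/2}-3)\rfloor=3$ and all consecutive gaps are at least $2$, so $r(1)=0$.
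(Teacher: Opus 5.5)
Your strategy is the same as the paper's. You bound $r_{\cS-\cS}(d)\le 1+\sum_{k\le K}N_k(d)$, sum Lemma~\ref{lem:boundv2} to get the leading coefficient $\beta=\frac{128\pi^2}{1323}\approx 0.955<1$ (which is why $C=\frac74$), and split into a finite computation for small $d$ and the analytic bound for large $d$.

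One claim in your analytic step is false, and it matters because the margin is thin. The bound $K\le ((d+1)/C)^{2/3}+c$ with $c$ a small constant does not hold. Put $u=(K+3)^{1/2}$ and $A=(d+1)/C+3^{3/2}-3$. The defining equation of $K$ becomes $u^2(u-1)=A$, so $u=A^{1/3}+\frac13+o(1)$ and $K=A^{2/3}+\frac23A^{1/3}+O(1)$. The excess of $K$ over $((d+1)/C)^{2/3}$ therefore grows like $\frac23((d+1)/C)^{1/3}$, and it is already about $6.6$ at $d=4476$. At that same $d$, $(1-\beta)(d+1)$ exceeds $K+2+\frac{64}{63}(1+\ln K)$ by less than $0.01$. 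So if you used your bound with small $c$ to locate $D_0$, you would certify a threshold below $4476$ (near $4100$ for $c\approx 0$). In that range your right-hand side $1+B(d)$ is actually at least $d$. There are two easy repairs:
\begin{itemize}
\item Use a correct explicit bound: $(u-1)^3<u^2(u-1)=A$ gives $K<(A^{1/3}+1)^2-3$.
\item Follow the paper: show $\epsilon(d)=(d-1)-B(d)$ is increasing by implicit differentiation, using $K'=1/\bigl(C(\frac32(K+3)^{1/2}-1)\bigr)<\frac1{35}$ for $d\ge 4476$. Then check the single value $\epsilon(4476)>0$. This avoids any closed-form bound on $K$.
\end{itemize}

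Your estimate of $D_0$ is also far off. Your own figures give $(0.69/0.045)^3\approx 3600$, not $10^5$. With the lower-order terms included, the unsharpened bound $1+B(d)<d$ holds for $d\ge 4476$ and fails at $d=4475$, which is exactly the paper's split point. So none of your proposed sharpenings is needed, and the direct computation only has to cover $3\le d\le 4475$.
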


\begin{proof}
We show that $r_{\cS-\cS}(d) \le d - 1$ for all $d \ge 1$. The proof is divided into two cases, according to whether $d \le 4475$.

First, if $d\le 4475$, we proceed as follows. The minimum gap in $\cS$ is $h_1 - h_0 = 3$, so $r_{\cS-\cS}(d) = 0 \leq d-1$ for $d \leq 2$. For $3 \le d \le 4475$, the inequality $r_{\cS-\cS}(d) \le d-1$ is verified by direct computation. For each $d$, the contribution from $h_0$ is at most $1$ (since $\varphi$ is strictly increasing, at most one $k$ satisfies $h_k = d$). For each $k \leq \lfloor K \rfloor$, the values $x_L$ and $x_R$ satisfying
    \[ W_k(x_L) = d-1 \quad \text{and} \quad W_k(x_R) = d+1 \]
are determined, and the exact representation count is computed by checking $h_{m+k} - h_m = d$ for each integer $m$ in $(x_L, x_R)$. The total count, including the $h_0$ contribution, is verified to be at most $d-1$ in every case.

Now, suppose that $d \ge 4476$.
By Lemma~\ref{lem:cutoffv2}, only gap sizes $k$ up to $\lfloor K \rfloor$ contribute to the tail sum $\sum_k N_k(d)$. Since $\varphi$ is strictly increasing, the $h_0$ term contributes at most $1$ to $r_{\cS-\cS}(d)$. Since $r_{\cS-\cS}(d)$ is an integer, it suffices to show that the upper bound is strictly less than $d$, as this implies $r_{\cS-\cS}(d) \le d-1$. Applying Lemma~\ref{lem:boundv2} and using $\sum_{k=1}^{\infty} \frac{1}{k^2} = \frac{\pi^2}{6}$ and $\sum_{k=1}^{N} \frac{1}{k} \le 1 + \ln N$:
\begin{equation}\label{eq:rd_boundv2}
r_{\cS-\cS}(d) \le 1 + \sum_{k=1}^{\lfloor K \rfloor}\biggl(\frac{16(d+1)}{9C^2 k^2} + \frac{16}{9Ck} + 1\biggr)
< 1 + \frac{8\pi^2(d+1)}{27C^2} + \frac{16(1+\ln K)}{9C} + K =: 1 + B(d).
\end{equation}
We must show $1 + B(d) < d$ for all $d \ge 4476$.

\medskip
\noindent Now, we examine the reason for the specific value of $C$. The bound $B(d)$ consists of three terms. The first, $\frac{8\pi^2(d+1)}{27C^2}$, is linear in $d$. The second and third terms, $\frac{16(1+\ln K)}{9C}$ and $K$, are sublinear. For $B(d)$ to be less than $d$ for all large $d$, the coefficient of the leading term must satisfy $\frac{8\pi^2}{27C^2} < 1$. Thus,the condition on $C$ is
\begin{equation}\label{eq:alpha_conditionv2}
\frac{8\pi^2}{27C^2} < 1 \qquad\iff\qquad C > \sqrt{\frac{8\pi^2}{27}} = \frac{2\pi\sqrt{2}}{3\sqrt{3}} \approx 1.71.
\end{equation}
If $C$ is at or below this threshold, the linear term alone already exceeds $d$ for large $d$. If $C$ is above the threshold, the linear term grows strictly slower than $d$. So, we choose $C = \frac{7}{4}$ as it allows for viable direct computation $(d\le4475)$.

\medskip
\noindent Now we verify $d \ge 4476$.

Define $\epsilon(d) := (d-1) - B(d)$, so that $\epsilon(d) > 0$ implies
$1 + B(d) < d$. Substituting $C = \frac{7}{4}$ and writing
$\beta = \frac{128\pi^2}{1323}$ for the value of $\frac{8\pi^2}{27C^2}$
at $C = \frac{7}{4}$:
\[
\epsilon(d) = (1-\beta)(d+1) - 2 - \frac{64(1+\ln K)}{63} - K.
\]

Differentiating implicitly (using $K$ as a function of $d$):
\[
\epsilon'(d) = (1-\beta) - \left(\frac{64}{63 K} + 1\right)K'.
\]
The derivative $K'$ is obtained by differentiating the defining relation
$C\bigl((K+3)^{3/2} - 3^{3/2} - K\bigr) = d+1$:
\[
K' = \frac{1}{C\bigl(\tfrac{3}{2}(K+3)^{1/2} - 1\bigr)}.
\]

Since $\pi^2 < 9.87$, we have $\beta < 0.955$, so $1 - \beta > 0.045$.
For $d \ge 4476$, we have $K > 194$, so $(K+3)^{1/2} > 14$, giving
\[
K' < \frac{4}{7\bigl(\tfrac{3}{2}\cdot 14 - 1\bigr)}
   = \frac{4}{140} = \frac{1}{35} < 0.029.
\]
Furthermore, $\frac{64}{63 K} + 1 < \frac{64}{63 \cdot 194} + 1 < 1.006$.
Therefore
\[
\epsilon'(d) > 0.045 - 0.029 \cdot 1.006 > 0.045 - 0.030 = 0.015 > 0.
\]
Thus $\epsilon'(d) > 0$ for all $d \ge 4476$.

A direct computation gives $\epsilon(4476) > 0$. Since $\epsilon$ is
increasing for all $d \ge 4476$, it follows that $\epsilon(d) > 0$ for all
$d \ge 4476$, and hence $r_{\cS-\cS}(d) \le d-1$ in this range.

Both cases confirm $r_{\cS-\cS}(d) \le d-1$ for all $d \ge 1$, so $\cS$
is an LM ruler. The diameter bound
$\diam(\cS_n) = h_{n-1} \le C\bigl((n+1)^{3/2} - (n+1)\bigr)$ follows
directly from the definition of $h_m$.
\end{proof}

\subsection{A lower bound on the minimum diameter of an LM ruler}\label{sec:L lower bound}

While this lower bound is not needed for the proof of Theorem~\ref{theorem:main}, we provide it as it demonstrates that the correct growth rate for LM rulers is $\Theta(n^{3/2})$.

\begin{theorem}\label{thm:lower_bound}
For any LM ruler $\cA$ of size $n \geq 2$, we have
    \[\diam(\cA) \geq \frac{2\sqrt{2}}{3}(n-1)^{3/2}.\]
\end{theorem}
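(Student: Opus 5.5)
We count pairs with small differences. Write $\cA=\{a_1<a_2<\dots<a_n\}$ and let $D=\diam(\cA)=a_n-a_1$. For each $k$ with $1\le k\le n-1$, the $n-k$ pairs $(a_i,a_{i+k})$ have differences $a_{i+k}-a_i$, and these sum by telescoping:
\[
\sum_{i=1}^{n-k}(a_{i+k}-a_i)\le kD .
\]
Each such difference is at least $k$, since there are $k$ strict increases in between. Summing over $k=1,\dots,n-1$ gives a collection of $\binom{n}{2}$ differences, namely all pairs, whose total is at most $D\binom{n}{2}$. This telescoping route is one option, but the cleaner approach is to bound the sum of all pairwise differences from below using the LM condition.

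The LM condition says that the multiset of all $\binom n2$ positive differences contains each value $d$ at most $d-1$ times. Hence the sum of all differences is at least the sum of the greedy multiset: one copy of $2$, two copies of $3$, and so on, taken until $\binom n2$ elements are used. If we fill values $2,\dots,t$ completely, we use $\sum_{d=2}^t(d-1)=\binom t2$ elements with total $\sum_{d=2}^t d(d-1)=\frac{(t+1)t(t-1)}{3}$. So with $\binom t2\approx\binom n2$, that is $t\approx n$, the total sum $S$ of differences satisfies $S\gtrsim n^3/3$.

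Upper bound on $S$: this is where we must be careful. A naive bound $S\le D\binom n2$ yields only $D\gtrsim 2n/3$, which is useless. We need an inequality that weights large differences properly, so instead we use only differences with $k$ small.

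The better plan restricts to $k\le m$ for a parameter $m$. The pairs $(a_i,a_{i+k})$ with $1\le k\le m$ number $P=\sum_{k=1}^m(n-k)$, and their differences sum to at most $\sum_{k=1}^m kD=\frac{m(m+1)}{2}D$. By the greedy argument, their sum is at least the smallest total of $P$ positive integers with multiplicity of $d$ at most $d-1$, roughly $\frac{(2P)^{3/2}}{3}$. Hence
\[
D\ \gtrsim\ \frac{2(2P)^{3/2}}{3m(m+1)} .
\]
With $m$ fixed and $P\approx mn$ this grows like $n^{3/2}/\sqrt m$, so $m=1$ is best. Taking $m=1$: the $n-1$ consecutive gaps sum to exactly $D$. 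The LM condition forbids gap $1$ and allows at most $d-1$ gaps equal to $d$. If $n-1=\binom t2$, the minimum total is $\frac{(t+1)t(t-1)}{3}$. Since $t\approx\sqrt{2(n-1)}$, this gives $D\ge\frac{t^3-t}{3}\approx\frac{2\sqrt2}{3}(n-1)^{3/2}$, which is exactly the target constant. So the proof uses only consecutive gaps.

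The main obstacle is making the inequality exact for every $n$, not just at the triangular values. Let $N=n-1$ and define $F(N)$ as the minimal sum of $N$ positive integers in which each $d$ appears at most $d-1$ times. The exchange argument showing greedy is optimal is routine. Then we must show $F(N)\ge\frac{2\sqrt2}{3}N^{3/2}$. Write $N=\binom t2+r$ with $0\le r\le t$, so that
\[
F(N)=\frac{(t+1)t(t-1)}{3}+r(t+1).
\]
The cleanest way to compare is by a continuous majorization. The $j$-th smallest element of the greedy multiset is the value $d$ with $\binom{d-1}{2}<j\le\binom d2$. This gives $d\ge\frac12+\sqrt{2j-\frac74}$ and in particular $d>\sqrt{2j}$. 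Summing over $j=1,\dots,N$ and comparing with $\int_0^N\sqrt{2x}\,dx=\frac{2\sqrt2}{3}N^{3/2}$, which is valid since $\sqrt{2x}$ is increasing and $\sum_{j=1}^N\sqrt{2j}\ge\int_0^N\sqrt{2x}\,dx$, we obtain $D\ge F(N)\ge\frac{2\sqrt2}{3}(n-1)^{3/2}$. The one step to check carefully is the inequality $d>\sqrt{2j}$, which follows from $j\le\binom d2<d^2/2$.
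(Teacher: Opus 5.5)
Your final argument is correct and is essentially the paper's proof: restrict to the $n-1$ consecutive gaps, show the $j$-th smallest gap exceeds $\sqrt{2j}$ because at most $\binom{d}{2}$ gaps can be $\le d$, and compare $\sum_{j\le n-1}\sqrt{2j}$ with $\int_0^{n-1}\sqrt{2x}\,dx$. The only difference is that you pass through optimality of the greedy multiset. The ``routine exchange argument'' you invoke for that is exactly the counting observation that the $j$-th smallest gap dominates the $j$-th greedy element, and the telescoping and $m$-parameter exploration can be dropped.
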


\begin{proof}
Suppose that $\cA=\{\lambda_1<\dots<\lambda_n\}$, and set $s_i=\lambda_{i+1}-\lambda_i$ to be the first differences. Further, let 
    \[s_{(1)} \leq s_{(2)} \leq \cdots \leq s_{(n-1)}\]
be a reordering of the first differences $s_1,\dots,s_{n-1}$.

By the definition of LM ruler, there are at most $d-1$ differences of size $d$, and so there are at most
  \[\sum_{d=1}^\ell (d-1) = \frac{\ell(\ell-1)}2\]
first differences of size at most $\ell$.
Taking $\ell=\sqrt{2i}$, we see that there are at most $\sqrt{2i}(\sqrt{2i}-1)/2<i$ first differences of size $\sqrt{2i}$ or smaller, whence 
\begin{equation}\label{eq:greedy}
s_{(i)}>\sqrt{2i}.
\end{equation}

The diameter equals the sum of the first differences,
    \[\diam(\cA) = \sum_{i=1}^{n-1} s_i = \sum_{i=1}^{n-1} s_{(i)},\]
and applying Inequality~\eqref{eq:greedy} to each term gives
    \[\diam(\cA) \geq \sum_{i=1}^{n-1} \sqrt{2i} = \sqrt{2}\sum_{i=1}^{n-1}\sqrt{i}.\]
Since $\sqrt{t}$ is increasing, $\sqrt{i} \geq \int_{i-1}^{i}\sqrt{t}\,dt$ for each $i \geq 1$. Summing over $i$ gives
    \[\sum_{i=1}^{n-1}\sqrt{i} \;\geq\; \sum_{i=1}^{n-1}\int_{i-1}^{i}\sqrt{t}\,dt
    = \int_0^{n-1}\sqrt{t}\,dt = \frac{2}{3}(n-1)^{3/2}.\]
Therefore,
    \[\diam(\cA) \geq \sqrt{2} \cdot \frac{2}{3}(n-1)^{3/2},\]
as claimed.
\end{proof}

\section{An upper bound on the diameter of \texorpdfstring{$g$}{g}-Golomb rulers}\label{sec:upperbound}

\begin{lemma}\label{lem:min distance}
Let $g$ and $b$ be positive integers with $g\ge b-1$. Let $\cI = [0, g+2b-2]$, let $\cH \subseteq [b,\, g+b-1]$ with $|\cH| = b-1$, and let $\cG = \cI \setminus \cH$. Then for all $d \ge b$ we have $r_{\cG-\cG}(d) \le g$.
\end{lemma}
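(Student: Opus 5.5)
Let $N = g+2b-2$, so $\cI=[0,N]$. For $d\ge 1$ we start from the exact count in the full interval, $r_{\cI-\cI}(d) = N+1-d$. Removing the holes $\cH$ destroys every pair $(x,x+d)$ in which at least one coordinate lies in $\cH$. By inclusion--exclusion,
\[
r_{\cG-\cG}(d) = (N+1-d) - \#\{x\in\cH : x+d\le N\} - \#\{x\in\cH : x-d\ge 0\} + r_{\cH-\cH}(d).
\]
Here the first subtracted term counts pairs whose left end is a hole, the second counts pairs whose right end is a hole, and the last term adds back pairs with both ends in $\cH$. The plan is to bound each term using $\cH\subseteq[b,g+b-1]$ and $|\cH|=b-1$.

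First take $b\le d\le g+b-1$. For $x\in\cH$ we have $x\ge b$, so $x-b\ge 0$. We also have $x\le g+b-1$, so $x+b-1\le N$. The second of these shows that $x+d\le N$ holds at least when $d\le b-1$, which is the wrong range, so the condition has to be handled case by case. For the left-end count, $x+d\le N$ iff $x\le N-d$. For the right-end count, $x-d\ge0$ iff $x\ge d$. For each hole $x$, at least one of these holds: otherwise $d\ge x+1$ and $d\ge N-x+1$, which gives $2d\ge N+2$, so $d\ge g/2+b$. The argument therefore splits into two sub-cases.

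\emph{Small $d$} ($d\le N/2$). Every hole kills at least one pair, so the subtracted terms total at least $b-1$. The pairs inside $\cH$ are bounded by $r_{\cH-\cH}(d)\le$ the number of holes $x$ with $x+d\in\cH$. Each such $x$ is counted in both subtracted terms, and $x+d\in\cH$ forces $x+d\le N$; because $x\ge b$ we get $x\ge d$ as well, at least when $d\le b$, which is not automatic in general. The cleaner route is to pair each hole $x$ with the two potential partners $x-d$ and $x+d$. I will count
\[
\#\{x\in\cH: x+d\le N\} + \#\{x\in\cH: x-d\ge 0\} - r_{\cH-\cH}(d) \;\ge\; |\cH| \;=\; b-1
\]
whenever every hole has at least one in-range partner that is not a hole. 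This follows by orienting each chain $x, x+d, x+2d,\dots$ of holes (an arithmetic progression with difference $d$): each maximal chain of $t$ holes destroys at least $t$ pairs, namely the $t-1$ internal pairs plus at least one external pair at an end still inside $[0,N]$. Consequently $r_{\cG-\cG}(d)\le N+1-d-(b-1)=g+b-d\le g$, using $d\ge b$.

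\emph{Large $d$.} Here the trivial bound suffices without any hole analysis: $r_{\cG-\cG}(d)\le N+1-d\le g$ as soon as $d\ge 2b-1$. For $b\le d\le 2b-2$ the chain argument is needed. Each maximal chain has first element $x\ge b$, and $x-d\ge b-d\ge 0$ holds only when $d\le b$. The main obstacle is therefore confirming, for every chain and every $d$ in this window, that at least one end extends inside $[0,N]$. If the chain's first element $x$ has $x<d$ and its last element $y$ has $y+d>N$, then the chain spans nearly the whole interval. To rule this out I will bound the number of pairs directly: $N+1-d$ versus $g$ requires killing only $2b-1-d\le b-1$ pairs, and a single hole $x$ already kills both $(x-d,x)$ or $(x,x+d)$ whenever either lies in range. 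I will check that the constraints $x\ge b$ and $x\le g+b-1$ keep at least $2b-1-d$ of these pairs, using $g\ge b-1$ to ensure the interval is long enough. This chain case analysis is the step I expect to be hardest.
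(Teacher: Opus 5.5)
There is a genuine gap in the range $b\le d\le 2b-2$, which is the only nontrivial part of the lemma. In your ``small $d$'' case you assert that every maximal chain of holes with difference $d$ has at least one outer pair inside $[0,N]$. From this you conclude $r_{\cG-\cG}(d)\le g+b-d$. You only justified the single-hole version (each hole $x$ has $x\ge d$ or $x+d\le N$), and the chain version is false.

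Take $g=6$, $b=3$, $N=10$, $\cH=\{3,7\}\subseteq[3,8]$ and $d=4\le N/2$. The chain $3,7$ has both outer partners $-1$ and $11$ outside $[0,10]$, so only the pair $(3,7)$ is destroyed. Indeed $r_{\cG-\cG}(4)=6=g>g+b-d=5$. So destroying $b-1$ pairs is unattainable in general; only $2b-1-d$ destroyed pairs can be guaranteed. You notice this in your last paragraph, but there the required count is left as a promise (``I will check that\dots'') rather than an argument.

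The missing idea is a short count that avoids chains and double counting entirely. Use only the pairs $(h-d,h)$ whose right endpoint is a hole $h\ge d$. These pairs are pairwise distinct, since they have distinct right endpoints. They lie in $\cI$, since $h\le g+b-1\le N$. Because $\cH\subseteq[b,g+b-1]$, at most $|[b,d-1]|=d-b$ holes are smaller than $d$. So at least $(b-1)-(d-b)=2b-1-d$ such pairs are destroyed, giving $r_{\cG-\cG}(d)\le (N+1-d)-(2b-1-d)=g$.

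Your chain framework can be repaired the same way. A chain with both outer pairs outside $[0,N]$ must start at a hole in $[b,d-1]$, so at most $d-b$ chains are deficient, each by exactly one pair. Together with your trivial bound $N+1-d\le g$ for $d\ge 2b-1$, this completes the proof, and it is exactly the paper's argument.
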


\begin{proof}
The setup can be visualized as follows.
\begin{center}
\begin{tikzpicture}[
    scale=1.0,
    >=stealth
  ]

  \def\rowsep{-1.0}

  \def\xzero{0.0}
  \def\xbone{1.6}
  \def\xgpbmone{7.1}
  \def\xgp2bm2{8.7}

  \def\xaxisleft{-0.5}
  \def\xaxisright{9.5}
  \def\ytop{1.0}

  \def\bk{0.20}
  \def\sf{0.14}
  \def\pg{0.10}
  \def\po{0.07}

  \def\xcollabel{9.7}

  \foreach \xx/\lbl in {
      \xzero/{$0$},
      \xbone/{$b$},
      \xgpbmone/{$g{+}b{-}1$},
      \xgp2bm2/{$g{+}2b{-}2$}}{
    \draw[thin, gray] (\xx, \ytop-0.2) -- (\xx, 2*\rowsep-0.3);
    \node[above] at (\xx, \ytop-0.2) {\lbl};
  }

  \newcommand{\Lbrack}[2]{
    \draw[line width=1.5pt] (#1, #2-\bk) -- (#1, #2+\bk);
    \draw[line width=1.5pt] (#1, #2+\bk) -- (#1+\sf, #2+\bk);
    \draw[line width=1.5pt] (#1, #2-\bk) -- (#1+\sf, #2-\bk);
  }
  \newcommand{\Rbrack}[2]{
    \draw[line width=1.5pt] (#1, #2-\bk) -- (#1, #2+\bk);
    \draw[line width=1.5pt] (#1, #2+\bk) -- (#1-\sf, #2+\bk);
    \draw[line width=1.5pt] (#1, #2-\bk) -- (#1-\sf, #2-\bk);
  }
  \newcommand{\Lparen}[2]{
    \draw[line width=1.5pt] (#1+\po, #2+\bk) -- (#1, #2) -- (#1+\po, #2-\bk);
  }
  \newcommand{\Rparen}[2]{
    \draw[line width=1.5pt] (#1-\po, #2+\bk) -- (#1, #2) -- (#1-\po, #2-\bk);
  }

  \def\y{0}
  \node[left] at (-0.8, \y) {$\mathcal{I}$};
  \draw (\xaxisleft, \y) -- (\xaxisright, \y);
  \draw[line width=2pt] (\xzero, \y) -- (\xgp2bm2, \y);
  \Lbrack{\xzero}{\y}
  \Rbrack{\xgp2bm2}{\y}
  \node[right] at (\xcollabel, \y) {$|\mathcal{I}| = g+2b-1$};

  \def\y{\rowsep}
  \node[left] at (-0.8, \y) {$\mathcal{H}$};
  \draw (\xaxisleft, \y) -- (\xaxisright, \y);
  \draw[line width=2pt, dashed] (\xbone, \y) -- (\xgpbmone, \y);
  \Lbrack{\xbone}{\y}
  \Rbrack{\xgpbmone}{\y}
  \node[right] at (\xcollabel, \y) {$\mathcal{H}\subseteq [b,g+b-1],\ |\mathcal{H}|=b-1$};

  \def\y{2*\rowsep}
  \node[left] at (-0.8, \y) {$\mathcal{G}$};
  \draw (\xaxisleft, \y) -- (\xaxisright, \y);

  \draw[line width=2pt] (\xzero, \y) -- (\xbone, \y);
  \Lbrack{\xzero}{\y}
  \Rparen{\xbone-\pg}{\y}

  \draw[line width=2pt, dashed] (\xbone, \y) -- (\xgpbmone, \y);
  \Lbrack{\xbone}{\y}
  \Rbrack{\xgpbmone}{\y}

  \draw[line width=2pt] (\xgpbmone, \y) -- (\xgp2bm2, \y);
  \Lparen{\xgpbmone+\pg}{\y}
  \Rbrack{\xgp2bm2}{\y}
  \node[right] at (\xcollabel, \y) {$|\mathcal{G}| = g+b$};

\end{tikzpicture}
\end{center}

If $d>g+2b-2=\diam(\cI)$, then no pair in $\cI$ has difference $d$, so $r_{\cG-\cG}(d)=0$.

Now suppose $b\le d\le g+2b-2$. In the full interval $\cI$, the number of pairs with difference $d$ is
\[
r_{\cI-\cI}(d)=g+2b-2-d+1=g+2b-1-d.
\]

We consider three ranges of $d$, according to whether $d\ge 2b-1$. If $d\ge 2b-1$, then
  \[ r_{\cG-\cG}(d)\le r_{\cI-\cI}(d)=g+2b-1-d\le g.\]
On the other hand, suppose that $b \le d \le 2b-2$.
Every pair of elements of $\cG$ with difference $d$ is a pair of elements of $\cI$ with difference $d$ in which neither  element of the pair is in $\cH$; ergo, we have
\begin{align}
    r_{\cG-\cG}(d)
    &=\#\left\{(x-d,x): x,x-d\in \cG\right\} \notag\\
    &=\#\left\{(i-d,i): i,i-d \in \cI \right\} \notag\\
    &\qquad\qquad -\#\left\{(h-d,h) : (h\in\cH \text{ or } h-d \in \cH) \text{ and } (h,h-d \in \cI) \right\} \notag\\
    &= g+2b-1-d -\#\left\{h \in [d,g+2b-2] : h\in\cH \text{ or } h-d \in \cH \right\} \label{eq:r(d) as holes}\\
    &\le g+2b-1-d - | \cH \cap [d,g+2b-2]|.\notag
\end{align}
If $d=b$, then $\cH \cap [d,g+2b-2]=\cH$, and we have shown that
  \(r_{\cG-\cG}(b) \le g\).
If $d>b$, then $\cH \cap [d,g+2b-2] \subseteq \cH \setminus[b,d-1]$, in which case 
  \[| \cH \cap [d,g+2b-2]| \ge |\cH|-|[b,d-1]|=(b-1)-(d-b).\]
It follows that
  \(r_{\cG-\cG}(d) \le g\).
\end{proof}

\begin{proof}[Proof of Theorem~\ref{theorem:main}.]
By Lemma~\ref{lem:min bound}, we know that $G(g,g+b)\ge g+2b-2$. We need to show that, given $g,b$ with $g\ge L(b-1)+1$, there is a $g$-Golomb ruler $\cG$ with $g+b$ elements contained in $\cI := [0,g+2b-2]$.

In Lemma~\ref{lem:small_b}, we gave examples that achieve this for $b=1,g\ge 1 = L(0)+1$, $b=2,g\ge 1=L(1)+1$, $b=3,g\ge 2, L(2)+1=3$, and $b=4,g\ge 4, L(3)+1=6$. Recall that the value of $L(n)$ for $0\le n \le 18$ is given in Table~\ref{tab:LM rulers}. We henceforth assume $b\ge 5$.

Our strategy is to take an LM ruler contained in $[b,g+b-1]$ with $b-1$ elements as $\cH$, and set $\cG:=\cI \setminus \cH$.

By Definition~\ref{def:LM ruler}, there is an LM ruler with $b-1$ elements and diameter $L(b-1)$. If $g\ge L(b-1)+1$, then we can take that ruler to be in the interval $[b,g+b-1]$, which has diameter $g-1$. We set this as $\cH$, our set of holes.

By Lemma~\ref{lem:min distance}, the placement of $\cH$ in $[b,g+b-1]$ immediately gives
  \[ r_{\cG-\cG}(d)\le g \]
for all $d\ge b$, with no further conditions on $\cH$. We now focus on the distances $d\le b-1$.

By Line~\eqref{eq:r(d) as holes} in the proof of Lemma~\ref{lem:min distance}, we have
  \[r_{\cG-\cG}(d) = g+2b-1-d -\#\left\{h \in [d,g+2b-2] : h\in\cH \text{ or } h-d \in \cH \right\} .\]
Thus, to show that $r_{\cG-\cG}(d) \le g$, it is sufficient to show that
  \begin{equation}\label{eq:to show}
  \#\left\{h \in [d,g+2b-2] : h\in\cH \text{ or } h-d \in \cH \right\} \ge 2b-1-d.
  \end{equation}

With $d\le b-1$, every element of 
    \[\cH\subseteq[b,g+b-1]\subseteq [d,g+b-1+d]\subseteq[d,b+2b-2]\] 
is counted as both an $h$ and an $h-d$, and so
  \begin{align*}
      \#\big\{h \in [d,g+2b-2]&\, : h\in\cH \text{ or } h-d \in \cH \big\} \\
      &=|\cH|+|\cH|- \#\left\{h \in [b,g+b-1] : h\in\cH \text{ and } h-d \in \cH \right\} \\
      &= 2(b-1)-r_{\cH-\cH}(d).
  \end{align*}
As $\cH$ is an LM ruler, we know that $r_{\cH-\cH}(d)\le d-1$, which establishes the inequality on Line~\eqref{eq:to show}. Thus, the main claim of Theorem~\ref{theorem:main} is proved.

In Theorem~\ref{theorem:lambda_boundv2}, we establish that $L(n+1) \le \frac74(n^{3/2}-n)$, from which we know that if $g\ge \frac74(b^{3/2}-b)+1$, then $G(g,g+b)=g+2b-2$.
\end{proof}

\section{Open Problems}

\begin{enumerate}
    \item Sharpen the bounds on $L(n)$ and compute more exact values.
    \item Our use of $N_k(d)$ to bound $r_{\cG-\cG}(d)$ is wasteful. Find a better route.
    \item Fix $b$, find the minimum value of $\{G(g,g+b):g\ge 1\}$.
    \item Extend Table~\ref{tab:sideways table}.
    \item Compute explicit upper and lower bounds for the missing entries of Table~\ref{tab:sideways table}.
    \item What is $\min \{ g: G(g,g+b)=g+2b-2\}$. If this minimum is $g_0$, when is it true that $G(g_0-1,g_0-1+b)=g_0+2b-2$?
\end{enumerate}

\section*{Acknowledgments}
The first author is grateful to Professor Isabella Novik, whose initial homework problem sparked a lasting interest in the study of Golomb rulers. 

The first author thanks their parents, Ashish and Sonika Gupta, and their grandparents, whose early influence inspired the author to pursue the study of mathematics. Further, the first author wishes to thank Dr.\ Anuj Kumar for his  unwavering support and inspiring the author to pursue research.

The authors also thank Theodore Meek and Saurav Mukherjee for their help with proofreading early manuscripts of this paper. We also thank Sean A. Irvine and Vrinda Inani for sharing their computations.

\bigskip
\hrule
\bigskip

\noindent 2020 {\it Mathematics Subject Classification}:
Primary 05B10. Secondary 11B13.

\noindent \emph{Keywords:} 
Sidon Set, 
Generalized Golomb Ruler, 
$g$-Golomb Ruler

\bigskip

\noindent (Concerned with sequences)

\seqnum{A392517},
\seqnum{A003022},
\seqnum{A392461},
\seqnum{A392462}, 
\seqnum{A392463},
\seqnum{A395265}.

\bigskip
\hrule
\bigskip

\vspace*{+.1in}
\noindent

\bigskip

\noindent
Return to \href{https://cs.uwaterloo.ca/journals/JIS/}{Journal of Integer Sequences home page}.
\vskip .1in


\begin{bibdiv}
\begin{biblist}
\bib{1984.Atkinson&Hassenklover}{article}{
  date = {Aug 1984},
  author = {Atkinson, M.~D.},
  author = {Hassenklover, A.}, 
  title = {Sets of integers with distinct differences},
  journal = {Sch Comput. Sci.}, 
  address = {Carleton Univ., Ottawa, Ont., Canada},
  note = {Rep. SCS-TR-63},
}

\bib{1986.Atkinson&Santoro&Urrutia}{article}{
  date = {June 1986},
  author = {Atkinson, M.~D.},
  author = {Santoro, N.},
  author = {Urrutia, J.},
  title = {Integer Sets with Distinct Sums and Differences and Carrier Frequency Assignments for Nonlinear Repeaters},
  doi = {unknown},
  eprint = {http://dl.comsoc.org/cocoon/comsoc/servlets/GetPublication?id=150119},
  journal = {IEEE Transactions on Communications},
  volume = {34},
  number = {6},
  pages = {614--617},
}
\bib{2023.Balogh&Furedi&Roy}{article}{
  author={Balogh, J\'{o}zsef},
  author={F\"{u}redi, Zolt\'{a}n},
  author={Roy, Souktik},
  title={An upper bound on the size of Sidon sets},
  journal={Amer. Math. Monthly},
  volume={130},
  date={2023},
  number={5},
  pages={437--445},
  issn={0002-9890},
  review={\MRev{4580380}},
  doi={\doi{10.1080/00029890.2023.2176667}},
}
\bib{2015.Caicedo&Martos&Trujillo}{article}{
  author={Caicedo, Yadira},
  author={Martos, Carlos A.},
  author={Trujillo, Carlos A.},
  title={$g$-Golomb rulers},
  language={English, with English and Spanish summaries},
  journal={Rev. Integr. Temas Mat.},
  volume={33},
  date={2015},
  number={2},
  pages={161--172},
  issn={0120-419X},
  review={\MRev{3445964}},
  doi={\doi{10.18273/revint.v33n2-2015006}},
}
\bib{2025.Carter&Hunter&Obryant}{article}{
   author={Carter, D.},
   author={Hunter, Z.},
   author={O'Bryant, K.},
   title={On the diameter of finite Sidon sets},
   journal={Acta Math. Hungar.},
   volume={175},
   date={2025},
   number={1},
   pages={108--126},
   issn={0236-5294},
   review={\MRev{4880650}},
   doi={\doi{10.1007/s10474-024-01499-8}},
}
\bib{2004.Obryant}{article}{
  author={O'Bryant, Kevin},
  title={A complete annotated bibliography of work related to Sidon sequences},
  journal={Electron. J. Combin.},
  volume={DS11},
  date={2004},
  pages={39},
  review={\MRev{4336213}},
}
\bib{OEIS}{webpage}{
    author = {Sloane, N.},
    author = {The OEIS Foundation Inc.},
    title = {The on-line encyclopedia of integer sequences},
    date= {2026},
    url = {https://oeis.org/},
}
\bib{2021.Ojeda&Urbano&Solarte}{article}{
  author={Martos Ojeda, Carlos Andres},
  author={Daza Urbano, David Fernando},
  author={Trujillo Solarte, Carlos Alberto},
  title={Near-optimal $g$-Golomb rulers},
  journal={IEEE Access},
  volume = {9},
  pages={65482--65489},  
  year = {2021},
  doi={\doi{10.1109/ACCESS.2021.3075877}},
}
\end{biblist}
\end{bibdiv}
\end{document}